\newcommand{\ot}{\otimes}
\def\C{{\mathbb C}}
\def\Z{{\mathbb Z}}
\def\1{{\bf 1}}
\def \<{\langle}
\def \>{\rangle}
\def\ot{\otimes}
\def \bee{\begin{equation}\label}
\def\k{\kappa}
\def\bZ{{\mathbb Z}}
\def\bC{{\mathbb C}}
\def\C{{\mathscr C}}
\def\b{\mathfrak{b}}
\def\C{{\mathbb C}}
\def\Z{{\mathbb Z}}
\def\1{{\bf 1}}
\def\b1{\mathbbold{1}}
\def\k{{\bf k}}
\newtheorem{theorem}{Theorem}[section]
\newtheorem{prop}[theorem]{Proposition}
\newtheorem{lem}[theorem]{Lemma}
\newtheorem{coro}[theorem]{Corollary}
\newtheorem{rem}[theorem]{Remark}
\theoremstyle{definition}
\newtheorem{definition}[theorem]{Definition}
\begin{document}
\begin{center}
{\Large {\bf Whittaker  modules for the twisted affine Nappi-Witten
Lie algebra  $\widehat{H}_{4}[\tau]$  }}  \\
\vspace{0.5cm} Xue Chen\footnote{Supported in part by China NSF
grant 11771281 and  Fujian Province NSF Grant 2017J05016.} \\
 School of Applied Mathematics, Xiamen University of Technology\\
  Xiamen 361024, China\\
E-mail: xuechen@xmut.edu.cn\\
\vspace{0.5cm} Cuipo Jiang\footnote{Supported by China NSF grants
11531004, 11771281.}
\\
School of Mathematical Science,  Shanghai Jiao Tong University\\
 Shanghai, 200240, China \\
E-mail: cpjiang@sjtu.edu.cn
\end{center}

\begin{abstract}
  The Whittaker  module $M_{\psi}$  and   its quotient  Whittaker  module $L_{\psi, \xi}$ for the twisted affine Nappi-Witten Lie
algebra $\widehat{H}_{4}[\tau]$ are studied.   For nonsingular type,
it is proved that if  $\xi\neq 0$,  then $L_{\psi,\xi}$ is irreducible   and any irreducible Whittaker $\widehat{H}_{4}[\tau]$-module of type $\psi$
with ${\bf k}$ acting as a non-zero scalar $\xi$ is isomorphic to $L_{\psi,\xi}$. Furthermore,  for $\xi=0$, all Whittaker vectors
of $L_{\psi, 0}$ are completely determined. For singular type, the Whittaker vectors of $L_{\psi, \xi}$ with $\xi \neq 0$ are
fully characterized.
\\
{\bf Key words:} Twisted affine Nappi-Witten Lie algebra;
 Whittaker vectors; Whittaker  modules\\
2000MSC:17B10, 17B65
\end{abstract}

\section{Introduction}

It is well known that the conformal field theory(CFT) has many
applications in various aspects of  mathematics and
 physics.
Wess-Zumino-Novikov-Witten (WZNW) models \cite{W} form one of the typical examples of CFTS. WZNW models were considered originally
within the framework of semisimple groups.  Later it was found that
the WZNW models based on non-abelian non-semisimple Lie groups are
closely relevant to the construction of exact string backgrounds.
For this reason, WZNW models of the special types have drawn much
research interest \cite{D,KK,NW,W}. In 1993, it was observed  by
Nappi and Witten  \cite{NW} that the WZNW model based on a central
extension of the two-dimensional Euclidean group describes the
homogeneous four-dimensional space-time corresponding to a
gravitational plane wave. The related Lie algebra, which is called
Nappi-Witten Lie algebra,  is neither abelian nor semisimple.

The Nappi-Witten Lie algebra $H_{4}$ is  a four-dimensional vector
space with $\C$-basis $\{a, b, c, d\}$ subject to the following
relations
$$
[a,b]=c,\ \ [d,a]=a,\ \ [d,b]=-b,\ \ [c,d]=[c,a]=[c,b]=0.
$$
Let $(\cdot, \cdot)$ be a symmetric bilinear form on $H_{4}$ defined
by
$$
(a,b)=1,\quad (c,d)=1,\quad\text{otherwise}, \ (\cdot, \cdot)=0.
$$
One can easily check  that $(\cdot, \cdot)$ is a non-degenerate
symmetric invariant  bilinear form on $H_{4}$. Just as the
non-twisted affine Kac-Moody Lie algebras given in \cite{K}, the
non-twisted affine Nappi-Witten Lie algebra $\widehat{H}_{4}$ is
defined by
\begin{equation*}
H_{4}\ot\bC[t,t^{-1}]\oplus\bC{\bf k},
\end{equation*}
with the   bracket relations
\begin{eqnarray*}
[x\ot t^m,y\ot t^n]=[x,y]\ot t^{m+n}+m(x,y)\delta_{m+n,0}{\bf k},
\quad [\widehat{H}_{4}, {\bf k}]=0,
\end{eqnarray*}
for $x,y\in H_{4}$ and $m,n\in\bZ$.

Define a linear map $\tau$ of $H_{4}$ by
 $$\tau a=b,  \;\;  \tau b=a,  \;\; \tau c=-c, \;\; \tau d=-d.$$
Clearly, $\tau^{2}=  {\rm id}_{H_{4}}$ and $\tau \in {\rm
Aut}(H_{4})$. It follows that $H_{4}$ becomes a ${\Z}_{2}$-graded
Lie algebra:
$$H_{4}=(H_{4})_{0}\oplus (H_{4})_{1},$$
where $$(H_{4})_{0}=\C(a+b), \ \ \ (H_{4})_{1}=\C(a-b)\oplus \C
c\oplus \C d.$$ Define a linear transformation $\hat{\tau}$ of
$\widehat{H}_{4}$ by
$$\hat{\tau}(h\otimes t^{m}+ s{\bf k})=(-1)^{m}\tau(h)\otimes t^{m}+ s{\bf k}$$
for $m\in\Z, h\in H_{4}$, and $s\in\C$. It is easy to check that
$\hat{\tau}$ is a Lie automorphism of $\widehat{H}_{4}$.  The
twisted affine  Nappi-Witten Lie algebra $\widehat{H}_{4}[\tau]$ is
\begin{eqnarray*}
&& \widehat{H}_{4}[\tau]= \{ u\in \widehat{H}_{4}\ | \ \hat{\tau}(u)=u\}\\
&&=\sum_{m\in \Z}\C(a+b)\otimes t^{2m}\oplus(\sum_{m\in \Z}(\C(a-b)+
\C c+ \C d)\otimes t^{2m+1})\oplus \C{\bf k},
\end{eqnarray*}
which is a subalgebra of $\widehat{H}_{4}$ fixed by $\hat{\tau}$.
In the following, for convenience, we shall denote $x\otimes t^{n}$
by $x(n)$.

 The representation theory for the non-twisted affine
Nappi-Witten Lie algebra $\widehat{H}_{4}$ has been well studied in
\cite{BJP}. The irreducible restricted modules for $\widehat{H}_{4}$
with some natural conditions have been classified and
 the extension of the vertex operator algebra
$V_{\widehat{H}_{4}}(l,0)$ by the even lattice $L$ has been
considered in \cite{JW}. Verma modules and vertex operator
representations for the twisted affine Nappi-Witten Lie algebra
$\widehat{H}_{4}[\tau]$ have also been  investigated in \cite{CJJ}.

 Whittaker modules were originally introduced  by D. Arnal and G. Pinczon
 \cite{AP}  in the  construction of a very vast family of
representations for $sl(2)$. A class of Whittaker modules for an
arbitrary finite-dimensional complex semisimple Lie algebra
$\mathfrak{g}$ were defined by B. Kostant in \cite{K1}. Whittaker
modules play a critical role in the   classification of all
irreducible $sl(2)$-modules. It was  illustrated in \cite{B} that
the irreducible modules for $sl(2)$ consist of  highest (lowest)
weight modules, Whittaker modules, and a third family obtained by
localization. Since  the construction of Whittaker modules depends on  the triangular decomposition of finite-dimensional
complex semisimple Lie algebras, it is reasonable to consider
Whittaker modules for other algebras with a triangular
decomposition. In the context of quantum groups,   Whittaker modules
for $U_{h}(\mathfrak{g})$ and $U_{q}(sl_{2})$ were investigated in
\cite{S} and \cite{O}. Recently, Whittaker modules for the affine Lie algebra $A_{1}^{(1)}$,
the Virasoro algebra, generalized Weyl algebras, non-twisted affine  Lie
algebras, and the Schr\"{o}dinger-Witt algebra as well as
 the twisted Heisenberg-Virasoro algebra were also considered in
\cite{ALZ, OW, BO, C, ZTL, LWZ}.

Inspired by the works mentioned above, P. Batra and V. Mazorchuk later generalized the ideas of
both Whittaker modules and the underlying categories to a broad class of Lie
algebras in \cite{BM}. Their framework allowed for a unified explanation of some important results
(such as Lemma \ref{lemma2.1} in this paper). Meanwhile, they formulated some conjectures on the form of
Whittaker vectors and the classification of irreducible Whittaker modules for
Lie algebras with triangular decompositions.
The aim of the present paper is to study  Whittaker
modules for  the twisted affine Nappi-Witten Lie algebra
$\widehat{H}_{4}[\tau]$. Some ideas we use come from \cite{ALZ,BM,OW, ZTL}.

Let $M$ be an $\widehat{H}_{4}[\tau]$-module and let $\psi:
\widehat{H}_{4}[\tau]^{(+)}\rightarrow \mathbb{C}$ be a Lie algebra
homomorphism. Recall that a non-zero vector $v\in M$ is called a Whittaker vector of type $\psi$ if
$xv=\psi(x)v$ for every $x\in \widehat{H}_{4}[\tau]^{(+)}$. An
$\widehat{H}_{4}[\tau]$-module $M$ is said to be a Whittaker module of type $\psi$
 if there is a type $\psi$ Whittaker vector $\omega\in M$ which
generates $M$. In this case  $\omega$ is called a cyclic Whittaker
vector. For $\xi\in\C$, denote by $L_{\psi,\xi}$ the quotient Whittaker module by the submodule generated by $({\bf k}-\xi)\omega$.

 For the non-singular type, that is, $\psi(c(1))\neq 0$, we  prove in Theorem \ref{thm3.7}  that   $L_{\psi,\xi}$ is irreducible if and only if $\xi\neq 0$,  and any irreducible Whittaker $\widehat{H}_{4}[\tau]$-module of type $\psi$ for which ${\bf k}$ acts by a non-zero scalar $\xi$ is isomorphic to $L_{\psi,\xi}$. This result together with Corollary \ref{coro3.8} confirms
the  Conjecture 33 and Conjecture 34 proposed in \cite{BM},   in the setting of the  twisted affine Nappi-Witten Lie algebra
$\widehat{H}_{4}[\tau]$.
  Furthermore, for the more challenging and interesting  case  that $\psi(c(1))\neq 0$ and $\xi=0$, we determine in Theorem \ref{thm 4.3}  all Whittaker vectors of $L_{\psi,0}$. It turns out that the proof of Theorem \ref{thm 4.3} is quite non-trivial.

 For the singular type  $\psi(c(1))=0$, which is also quite interesting,   we give in Theorem \ref{prop5.3} a full characterization of the  Whittaker vectors of $L_{\psi,\xi}$ for $\xi\neq 0$. Finally, for the identically zero case $\psi=0$, and $\xi\neq 0$,  by Theorem \ref{thm5.5} we give a filtration of the Whittaker module $L_{0,\xi}$  with the simple sctions given by  the Verma module $M(\psi,\xi)$  with  multiplicity  infinity.

The paper is organized as follows.  In Section 2,
Whittaker modules $M_{\psi}$ and $L_{\psi, \xi}$ for
$\widehat{H}_{4}[\tau]$ are constructed. In Section 3, the Whittaker
vectors of $M_{\psi}$ and $L_{\psi, \xi}$ for $\psi(c(1))\neq 0$  and $\xi\neq 0$ are
studied. In Section 4, the  Whittaker vectors of $L_{\psi,0}$ for $\psi(c(1))\neq 0$  and $\xi= 0$ are completely discussed. In the last section,  the  Whittaker vectors of $M_{\psi}$
and $L_{\psi, \xi}$  for $\psi(c(1))= 0$  and $\xi\neq 0$ , and relations to Verma modules are studied.

Throughout the paper, we  use $\C$, $\C^{*}$,  $\mathbb{N}$, $\Z$
and $\Z_{+}$ to denote the sets of the complex numbers, the non-zero
complex numbers, the non-negative integers, the integers and the
positive integers respectively.

\section{Preliminaries}
\setcounter{equation}{0}

We  first recall some general results on Whittaker modules of complex Lie algebras with a quasi-nilpotent
Lie subalgebra in \cite{ALZ,BM}. A Lie algebra $\mathfrak{n}$ is said to be $quasi$-$nilpotent$ provided that
 $\bigcap\limits_{k=0}^{\infty} \mathfrak{n}^{k}=0$,
where $\mathfrak{n}^{0}:=\mathfrak{n}$, $\mathfrak{n}^{k+1}:=[\mathfrak{n}^{k}, \mathfrak{n}]$ for any $k\in \mathbb{N}$.
Let $\mathfrak{g}$ be a complex Lie algebra and
$\mathfrak{n}$ be a quasi-nilpotent Lie subalgebra of $\mathfrak{g}$. Let $\psi: \mathfrak{n}\rightarrow \mathbb{C}$
be a Lie algebra homomorphism and $V$  be a $\mathfrak{g}$-module. A non-zero vector $v\in V$ is called a $Whittaker$ $vector$ $of$ $type$ $\psi$ if $xv=\psi(x)v$ for all $x\in \mathfrak{n}$. The module $V$ is said to be a $type$ $\psi$ $Whittaker$ $module$ for $\mathfrak{g}$ if it is generated by a type $\psi$ Whittaker vector. We say that $\mathfrak{n}$ acts on $V$ $locally$ $nilpotently$ \cite{ALZ}
if for any $v\in V$ there is $s\in \mathbb{N}$ depending on $v$ such that $x_{1}x_{2}\cdots x_{s}v=0$ for any
$x_{1},x_{2},\cdots, x_{s}\in \mathfrak{n}$. Let $\mathfrak{n}^{(\psi)}=\{x-\psi(x) \; \mid \; x\in \mathfrak{n} \}$.

\vskip 0.3cm
The following result comes from Lemma 3.1 in \cite{ALZ} and Proposition 32 in \cite{BM}.
\begin{lem}\label{lemma2.1}{\rm (\cite{ALZ,BM})}
Let $V$ be a type $\psi$ Whittaker module for $\mathfrak{g}$. Suppose that
the adjoint action of $\mathfrak{n}$ on $\mathfrak{g}/\mathfrak{n}$ is locally nilpotent.
Then
\begin{itemize}
\item[{\rm(i)}] $\mathfrak{n}^{(\psi)}$ acts locally nilpotently on $V$. In particular, $x-\psi(x)$ acts locally nilpotently on $V$ for any $x\in \mathfrak{n}$;
\item[{\rm(ii)}] any nonzero submodule of $V$ contains a Whittaker vector of type $\psi$;
\item[{\rm(iii)}] if the vector space of Whittaker vectors of $V$ is one-dimensional, then $V$ is simple.
\end{itemize}
\end{lem}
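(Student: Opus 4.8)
The plan is to derive everything from part (i), which carries all the content; parts (ii) and (iii) then follow by soft arguments. Fix a vector space complement $\mathfrak{m}$ of $\mathfrak{n}$ in $\mathfrak{g}$, so $\mathfrak{g}=\mathfrak{m}\oplus\mathfrak{n}$, and let $w$ be a cyclic Whittaker vector generating $V$. Since $xw=\psi(x)w$ for $x\in\mathfrak{n}$, the Poincar\'e--Birkhoff--Witt theorem gives $V=U(\mathfrak{g})w=U(\mathfrak{m})w$. Because uniform local nilpotency passes to quotient modules, it suffices to treat the universal Whittaker module $M_{\psi}$, for which $u\mapsto uw$ identifies $V$ with $U(\mathfrak{m})$ (here $w=1$). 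The first key step is the bracket identity: for $x\in\mathfrak{n}$ and $u\in U(\mathfrak{m})$ one has $x\,u=u\,x+(\ad x)(u)$ in $U(\mathfrak{g})$, and since $\psi$ extends to an algebra homomorphism $U(\mathfrak{n})\to\mathbb{C}$ (using $\psi([\mathfrak{n},\mathfrak{n}])=0$), this yields $(x-\psi(x))(uw)=\overline{(\ad x)(u)}\,w$, where $\overline{p}$ denotes the reduction of $p\in U(\mathfrak{g})$ to $U(\mathfrak{m})$ obtained by moving all $\mathfrak{n}$-factors to the right and applying $\psi$. Thus the twisted action of $\mathfrak{n}^{(\psi)}$ on $V$ is governed by the adjoint action of $\mathfrak{n}$ on $U(\mathfrak{m})$.

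Passing to the associated graded of the PBW filtration, $\mathrm{gr}\,U(\mathfrak{m})\cong S(\mathfrak{g}/\mathfrak{n})$, the induced operator of $x-\psi(x)$ becomes the derivation extending the adjoint action $\overline{\ad x}$ of $\mathfrak{n}$ on $\mathfrak{g}/\mathfrak{n}$. Here I would invoke the hypothesis directly: because $\ad\mathfrak{n}$ acts locally nilpotently on $\mathfrak{g}/\mathfrak{n}$, the subspaces $W_{(k)}=\{\,z\in\mathfrak{g}/\mathfrak{n}:(\ad x_{0})\cdots(\ad x_{k})z=0 \text{ for all } x_{i}\in\mathfrak{n}\,\}$ form an exhaustive increasing filtration with $\ad(\mathfrak{n})\,W_{(k)}\subseteq W_{(k-1)}$. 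By the Leibniz rule this index filtration extends to $S(\mathfrak{g}/\mathfrak{n})$ so that $\ad(\mathfrak{n})$ strictly lowers the index, which gives uniform local nilpotency on $\mathrm{gr}\,U(\mathfrak{m})$: every element is annihilated by all sufficiently long products of $\mathfrak{n}^{(\psi)}$.

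The hard part is lifting this from the associated graded back to $V$, that is, producing a single integer $s$ (depending only on $v$) with $(x_{1}-\psi(x_{1}))\cdots(x_{s}-\psi(x_{s}))v=0$ for all choices of $x_{i}\in\mathfrak{n}$. I would transport the filtration $W_{(\bullet)}$ through the PBW basis to a filtration of $U(\mathfrak{m})\cong V$ by the combined weight of a monomial (its index plus its PBW degree), and verify that each $x-\psi(x)$ strictly decreases this weight; local nilpotency is then immediate, since the weight cannot drop below zero. The genuine obstacle is this last verification: when $\ad x$ converts an $\mathfrak{m}$-factor into an $\mathfrak{n}$-factor, the reduction $\overline{(\cdot)}$ must be straightened back into PBW-normal form, generating lower-order correction terms, and one must confirm that every such term still drops the combined weight. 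This is precisely where the local nilpotency hypothesis, together with the quasi-nilpotency of $\mathfrak{n}$ in the background, does the real work, and it is the only place requiring genuine care.

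Granting (i), the rest is short. For (ii), let $N$ be a nonzero submodule and $0\neq v\in N$. By (i) there is a largest $j\ge 0$ for which some product $v'=(x_{1}-\psi(x_{1}))\cdots(x_{j}-\psi(x_{j}))v$ is nonzero; by maximality every length-$(j+1)$ product annihilates $v'$, so $(x-\psi(x))v'=0$, i.e. $xv'=\psi(x)v'$ for all $x\in\mathfrak{n}$. Since each $x-\psi(x)\in U(\mathfrak{g})$ and $N$ is a submodule, $v'\in N$ is a Whittaker vector of type $\psi$. For (iii), the cyclic generator $w$ is a Whittaker vector, so if the space of Whittaker vectors is one-dimensional it equals $\mathbb{C}w$; given any nonzero submodule $N$, part (ii) supplies a Whittaker vector in $N$, which must be a nonzero multiple of $w$, forcing $w\in N$ and hence $N=U(\mathfrak{g})w=V$. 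Therefore $V$ is simple.
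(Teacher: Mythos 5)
The paper does not actually prove this lemma: it is imported verbatim from the references (Lemma 3.1 of [ALZ] and Proposition 32 of [BM]), so there is no in-paper proof to compare against. Your strategy is the same as the one in those sources: reduce to the universal module $M_{\psi}\cong U(\mathfrak{m})$ via PBW, use the identity $(x-\psi(x))(uw)=(\ad x)(u)\,w$, and control the adjoint action through the exhaustive filtration $W_{(k)}$ of $\mathfrak{g}/\mathfrak{n}$ coming from local nilpotency. Your deductions of (ii) and (iii) from (i) are correct and are exactly the standard soft arguments (indeed (ii) is the same device the paper uses inside its proof of Lemma 2.2: take a minimal/maximal length of nonvanishing products and observe the resulting vector stays in the submodule).

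The one place you stop short is the decisive step of (i): you state that one must ``confirm that every such term still drops the combined weight'' but do not confirm it. This is a gap in execution rather than in strategy, and it is fillable with your own weight function (index plus PBW degree). Concretely, writing $[x,y_i]=m_i+n_i$ with $m_i\in\mathfrak{m}$, $n_i\in\mathfrak{n}$: the term with $m_i$ keeps the PBW degree but lowers the index of that factor by at least one, since $\overline{m_i}=\overline{\ad x}(\overline{y_i})\in W_{(k-1)}$ when $\overline{y_i}\in W_{(k)}$; the term with $n_i$, once $n_i$ is commuted to the right and absorbed into $\psi(n_i)w$, has PBW degree lowered by one; and every straightening correction $y_1\cdots\widehat{y_i}\cdots[n_i,y_j]\cdots y_n$ both lowers the degree by one and (for its $\mathfrak{m}$-component) lowers the index of the $j$-th factor, with its $\mathfrak{n}$-component feeding back into the same recursion at strictly smaller degree, so the recursion terminates. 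Hence $x-\psi(x)$ strictly decreases the combined weight and $s=(\text{weight of }v)+1$ works uniformly. You should also note that one genuinely needs the \emph{uniform} version of local nilpotency (the paper's definition quantifies over all $x_1,\dots,x_s$ after choosing $s$), which is what makes $W_{(k)}$ a filtration by subspaces; with that spelled out the argument is complete.
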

Furthermore, we have
\begin{lem}\label{lemma2.2}
Let $V$ be a type $\psi$ Whittaker module for $\mathfrak{g}$. Suppose that
the adjoint action of $\mathfrak{n}$ on $\mathfrak{g}/\mathfrak{n}$ is locally nilpotent.
Then Whittaker vectors in V are all of type $\psi$.
\end{lem}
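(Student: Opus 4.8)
The plan is to deduce this statement directly from part (i) of Lemma~\ref{lemma2.1}. Note that the hypothesis here---local nilpotency of the adjoint action of $\mathfrak{n}$ on $\mathfrak{g}/\mathfrak{n}$---is exactly the hypothesis of Lemma~\ref{lemma2.1}, so its conclusion is available: for every $x\in\mathfrak{n}$ the operator $x-\psi(x)$ acts locally nilpotently on $V$. The whole argument then rests on pitting this local nilpotency against the eigenvector property of an arbitrary Whittaker vector.

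First I would unwind the definitions. A Whittaker vector $v\in V$ is, by definition, a nonzero vector admitting some Lie algebra homomorphism $\phi:\mathfrak{n}\to\mathbb{C}$ with $xv=\phi(x)v$ for all $x\in\mathfrak{n}$, and its type is this $\phi$; the goal is to prove $\phi=\psi$. To this end, fix an arbitrary $x\in\mathfrak{n}$. By Lemma~\ref{lemma2.1}(i) there exists $s\in\mathbb{N}$ (depending on $v$ and $x$) such that $(x-\psi(x))^{s}v=0$. On the other hand, since $xv=\phi(x)v$, the element $x-\psi(x)$ scales $v$ by $\phi(x)-\psi(x)$, whence $(x-\psi(x))^{s}v=(\phi(x)-\psi(x))^{s}v$. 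Because $v\neq 0$, this forces $(\phi(x)-\psi(x))^{s}=0$, and therefore $\phi(x)=\psi(x)$. As $x\in\mathfrak{n}$ was arbitrary, we conclude $\phi=\psi$, i.e. $v$ is of type $\psi$.

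I do not expect a genuine obstacle: the substantive content is already encapsulated in the local nilpotency of $x-\psi(x)$ furnished by Lemma~\ref{lemma2.1}(i), and the remaining point is the elementary fact that a nonzero eigenvector cannot be annihilated by a positive power of an eigenvalue-shifted operator unless the shift is zero. The only place to be careful is to invoke local nilpotency in its precise sense---that a single power of the one operator $x-\psi(x)$ kills $v$---rather than the weaker-sounding statement that the family $\mathfrak{n}^{(\psi)}$ acts locally nilpotently; the specialization to $x_{1}=\cdots=x_{s}=x-\psi(x)$ in the definition of local nilpotency is exactly what licenses the step $(x-\psi(x))^{s}v=0$.
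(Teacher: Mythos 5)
Your proof is correct, and it is genuinely simpler and more direct than the one in the paper. The paper argues by contradiction: assuming $v$ has type $\psi'\neq\psi$, it takes a \emph{minimal} $s$ such that all length-$s$ products from $\mathfrak{n}^{(\psi)}$ kill $v$, extracts a nonzero vector $v'=(x_2-\psi(x_2))\cdots(x_s-\psi(x_s))v$ that is a Whittaker vector of type $\psi$, and then uses the commutator expansion (together with $\psi'([\mathfrak{n},\mathfrak{n}])=0$ and an induction) to show $v'$ is simultaneously of type $\psi'$, forcing $\psi'=\psi$. You instead fix a single $x\in\mathfrak{n}$ and pit the local nilpotency of the one operator $x-\psi(x)$ (the ``in particular'' clause of Lemma~\ref{lemma2.1}(i)) against the fact that $v$ is an eigenvector of it with eigenvalue $\phi(x)-\psi(x)$, concluding $(\phi(x)-\psi(x))^{s}=0$ and hence $\phi(x)=\psi(x)$ pointwise. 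This avoids the minimality argument, the auxiliary vector $v'$, and the commutator bookkeeping entirely, and it does not even use that $\phi$ is a Lie algebra homomorphism --- only the eigenvector relation $xv=\phi(x)v$. Both proofs rest on the same input, Lemma~\ref{lemma2.1}(i); yours extracts the conclusion with less machinery, while the paper's route has the incidental feature of exhibiting a type-$\psi$ Whittaker vector inside the $\mathfrak{n}^{(\psi)}$-orbit of $v$, which is not needed for the statement.
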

\begin{proof}
Let $\psi': \mathfrak{n}\rightarrow \mathbb{C}$ be a
Lie algebra homomorphism and $\psi'\neq \psi$. Assume $v\in V$ is
 a  Whittaker vector of type $\psi'$. Then $(y-\psi'(y))v=0$, for any $y\in \mathfrak{n}$.
From (i) we know that there is $s\in \mathbb{Z}_{+}$ such that
\begin{eqnarray*}
(x_{1}-\psi(x_{1}))(x_{2}-\psi(x_{2}))\cdots (x_{s}-\psi(x_{s}))v=0, \ \text {for any}\; x_{1}, x_{2},\cdots, x_{s}\in \mathfrak{n}.
\end{eqnarray*}
Take $s$ minimal. Then there exist $x_{2},x_{3},\cdots, x_{s}\in \mathfrak{n}$ such that
\begin{eqnarray*}
v'=(x_{2}-\psi(x_{2}))\cdots (x_{s}-\psi(x_{s}))v\neq0
\end{eqnarray*}
 and
 \begin{eqnarray*}
 (x-\psi(x))v'=0, \ \text {for all}\; x\in \mathfrak{n}.
 \end{eqnarray*}
 So $v'$ is a Whittaker vector of type $\psi$.
On the other hand, note that $\psi'([\mathfrak{n},\mathfrak{n}])=0$. By inductive assumption, we deduce that
\begin{eqnarray*}
&&(y-\psi'(y))v'\\
&=&(y-\psi'(y))(x_{2}-\psi(x_{2}))\cdots (x_{s}-\psi(x_{s}))v\\
&=&\sum_{i=2}^{s}(x_{2}-\psi(x_{2}))\cdots (x_{i-1}-\psi(x_{i-1}))ady(x_{i})(x_{i+1}-\psi(x_{i+1}))\cdots (x_{s}-\psi(x_{s}))v\\
&=&0
\end{eqnarray*}
 for any $y\in \mathfrak{n}$. That is $v'$ is also a Whittaker vector of type $\psi'$.
Then $\psi'= \psi$,  a contradiction to our assumption. Thus the lemma holds.
\end{proof}

 The twisted affine Nappi-Witten Lie algebra
$\widehat{H}_{4}[\tau]$  has a natural decomposition:
\begin{eqnarray*}
\widehat{H}_{4}[\tau]=\widehat{H}_{4}[\tau]^{(+)}\bigoplus
\widehat{H}_{4}[\tau]^{(0)} \bigoplus \widehat{H}_{4}[\tau]^{(-)},
\end{eqnarray*}
where
\begin{eqnarray*}
&&\widehat{H}_{4}[\tau]^{(+)}=Span_{\mathbb{C}}\{(a+b)(n), (a-b)(m),
c(m), d(m)\; |\;   n\in 2\mathbb{Z}_{+}, m\in 2\mathbb{N}+1\},\\
&&\widehat{H}_{4}[\tau]^{(-)}=Span_{\mathbb{C}}\{(a+b)(-n),
(a-b)(-m),
c(-m), d(-m)\; |\;   n\in 2\mathbb{Z}_{+}, m\in 2\mathbb{N}+1\},\\
&&\widehat{H}_{4}[\tau]^{(0)}=Span_{\mathbb{C}}\{(a+b)(0), {\bf
k}\}.
\end{eqnarray*}
Denote
\begin{eqnarray*}
\mathfrak{b}^{-}=\widehat{H}_{4}[\tau]^{(-)}\oplus
\widehat{H}_{4}[\tau]^{(0)}.
\end{eqnarray*}
Let $\mathbb{C}[{\bf k}]$ be the polynomial algebra generated
by ${\bf k}$. It is easy to see that $\mathbb{C}[{\bf k}]$ lies in
 the center of the universal enveloping algebra $U(\widehat{H}_{4}[\tau])$.

The following notation for $odd$ $partitions$ and $even$
$pseudopartitions$ will be used to describe bases for
$U(\widehat{H}_{4}[\tau])$ and  Whittaker modules.

Just as in \cite{OW, ZTL}, for a non-decreasing sequence of
positive odd numbers:
 \begin{eqnarray*}
 0<\lambda_{1}\leq \lambda_{2}\leq\cdots
 \leq\lambda_{t},\;  \lambda_{i}\in 2\mathbb{N}+1, \; 1\leq i\leq
 t,
 \end{eqnarray*}
we call $\lambda=(\lambda_{1}, \lambda_{2},\cdots,  \lambda_{t})$ an
$odd$ $partition$. For a non-decreasing sequence of non-negative
even numbers:
\begin{eqnarray*}0\leq\mu_{1}\leq \mu_{2}\leq\cdots \leq
\mu_{s},\;  \mu_{i}\in 2\mathbb{N}, \; 1\leq i\leq
 s,
\end{eqnarray*}
 we call
$\tilde{\mu}=(\mu_{1}, \mu_{2},\cdots,  \mu_{s})$ an $even$
$pseudopartition$. Let $\mathcal {P}_{odd}$  and $\widetilde{\mathcal {P}}_{even}$ denote the set of
$odd$ $partitions$ and $even$ $pseudopartitions$, respectively.
For $\lambda\in \mathcal
{P}_{odd}$, $\tilde{\mu}\in \widetilde{\mathcal {P}}_{even}$, we
also write
\begin{eqnarray*}
\lambda=(1^{\lambda(1)}, 3^{\lambda(3)},\cdots),\;
\tilde{\mu}=(0^{\mu(0)}, 2^{\mu(2)},\cdots),
\end{eqnarray*}
where $\lambda(k)$ and $\mu(l)$ are the number of times of $k$ and
$l$ appear respectively in the  $odd$ $partition$ and $even$
$pseudopartition$, and $\lambda(k)=\mu(l)=0$ for $k$ and $l$
sufficiently large.

Let $\tilde{\mu}=(\mu_{1}, \mu_{2},\cdots,  \mu_{s})\in
\widetilde{\mathcal {P}}_{even}$. Let $\lambda=(\lambda_{1},
\lambda_{2},\cdots,  \lambda_{t})$, $\nu=(\nu_{1}, \nu_{2},\cdots,
\nu_{r})$, $\eta=(\eta_{1}, \eta_{2},\cdots, \eta_{l})$ and
$\lambda,\nu, \eta\in
\mathcal {P}_{odd}$.  We define
\begin{eqnarray*}
|\tilde{\mu}|=\mu_{1}+ \mu_{2}+\cdots + \mu_{s}, \quad |\lambda|
=\lambda_{1}+ \lambda_{2}+\cdots + \lambda_{t},
\end{eqnarray*}
\begin{eqnarray*}
\#(\tilde{\mu})=\mu(0)+ \mu(2)+ \cdots, \quad \#(\lambda)
=\lambda(1)+ \lambda(3)+\cdots,
\end{eqnarray*}
\begin{eqnarray*}
\#(\tilde{\mu},\nu,\lambda,\eta)=\#(\tilde{\mu})+\#({\nu})+\#({\lambda})
+\#({\eta}),
\end{eqnarray*}
\begin{eqnarray*}
(a+b)(-\tilde{\mu})=(a+b)(-{\mu}_{1})(a+b)(-{\mu}_{2})\cdots (a+b)(-{\mu}_{s})
=(a+b)(0)^{\mu(0)}(a+b)(-2)^{\mu(2)} \cdots,
\end{eqnarray*}
\begin{eqnarray*}
(a-b)(-\nu)=(a-b)(-{\nu}_{1})(a-b)(-{\nu}_{2})\cdots (a-b)(-{\nu}_{r})
=(a-b)(-1)^{\nu(1)}(a-b)(-3)^{\nu(3)}\cdots,
\end{eqnarray*}
\begin{eqnarray*}
d(-\lambda)=d(-{\lambda}_{1})d(-{\lambda}_{2})\cdots d(-\lambda_{t})
=d(-1)^{\lambda(1)}d(-3)^{\lambda(3)}\cdots,
\end{eqnarray*}
\begin{eqnarray*}
c(-\eta)=c(-{\eta}_{1})c(-{\eta}_{2})\cdots c(-\eta_{l})=c(-1)^{\eta(1)}c(-3)^{\eta(3)}\cdots.
\end{eqnarray*}
For convenience, we define $\bar{0}=(0^{0}, 1^{0}, 2^{0},\cdots)$
and set $(a+b)(\bar{0})=(a-b)(\bar{0})=d(\bar{0})=c(\bar{0})=1 \in
U(\widehat{H}_{4}[\tau])$. In what follows, we regard $\bar{0}$ as
an element of $\mathcal {P}_{odd}$ and  $\widetilde{\mathcal
{P}}_{even}$.

\begin{definition}(\cite{ALZ,OW,ZTL})
Let $M$ be an $\widehat{H}_{4}[\tau]$-module and let $\psi:
\widehat{H}_{4}[\tau]^{(+)}\rightarrow \mathbb{C}$ be a Lie algebra
homomorphism. A non-zero vector $v\in M$ is called a Whittaker vector of type $\psi$ if
$xv=\psi(x)v$ for every $x\in \widehat{H}_{4}[\tau]^{(+)}$. An
$\widehat{H}_{4}[\tau]$-module $M$ is said to be a Whittaker module of type $\psi$
 if there is a type $\psi$ Whittaker vector $\omega\in M$ which
generates $M$. In this case we call $\omega$ a cyclic Whittaker
vector.
\end{definition}

The Lie algebra homomorphism $\psi$ is called nonsingular if
$\psi(c(1))\neq 0$, otherwise $\psi$ is called singular. The
commutator relation in the definition of $\widehat{H}_{4}[\tau]$
forces that
$$\psi((a+b)(n))=\psi((a-b)(m))=\psi(c(m))=0,$$
for $n\in 2\mathbb{Z}_{+}$, $m\in 2\mathbb{Z}_{+}+1$.

Fix a Lie algebra homomorphism $\psi:
\widehat{H}_{4}[\tau]^{(+)}\rightarrow \mathbb{C}$, and let
$\mathbb{C}_{\psi}$ be  the one-dimensional
$\widehat{H}_{4}[\tau]^{(+)}$-module for which
$x\alpha=\psi(x)\alpha$ for $x\in\widehat{H}_{4}[\tau]^{(+)}$ and
$\alpha\in \mathbb{C}^{*}$. Then  an induced
$\widehat{H}_{4}[\tau]$-module is defined by
\begin{eqnarray}
M_{\psi}=U(\widehat{H}_{4}[\tau])\otimes
_{U(\widehat{H}_{4}[\tau]^{(+)})}\mathbb{C}_{\psi}.
\end{eqnarray}
For $\xi\in \mathbb{C}$, since ${\bf k}$ is in the center of
$\widehat{H}_{4}[\tau]$, $({\bf k}-\xi)M_{\psi}$ is a submodule of
$M_{\psi}$. Define
\begin{eqnarray}
L_{\psi,\xi}=M_{\psi}/({\bf k}-\xi)M_{\psi},
\end{eqnarray}
and let $\overline{ \; \cdot \; }: M_{\psi}\rightarrow L_{\psi,\xi}$ be the canonical homomorphism.
Then $L_{\psi,\xi}$ is a quotient module of $M_{\psi}$ for
$\widehat{H}_{4}[\tau]$. We have the following results for  $M_{\psi}$  and $L_{\psi,\xi}$.
\begin{prop}\label{prop2.1}
 \begin{itemize}
\item[{\rm (i)}] $M_{\psi}$ and $L_{\psi,\xi}$ are both   Whittaker modules of type $\psi$, with  cyclic Whittaker
vectors $\omega:=1\otimes1$ and $\bar{\omega}:=\overline{1\otimes1}$, respectively;
\item[{\rm (ii)}]  The set
\begin{eqnarray}
\{{\bf k}^{t}(a+b)(-\tilde{\mu})(a-b)(-\nu)d(-\lambda)
c(-\eta){\omega} |\;
 (\tilde{\mu},\nu,\lambda,\eta)                 \nonumber\\
\in \widetilde{\mathcal {P}}_{even}\times \mathcal {P}_{odd}\times
\mathcal {P}_{odd}\times \mathcal {P}_{odd},\; t\in \mathbb{N}\}
\end{eqnarray}
forms a basis of $M_{\psi}$.   $L_{\psi,\xi}$ has a basis
\begin{eqnarray}
\{(a+b)(-\tilde{\mu})(a-b)(-\nu)d(-\lambda)
c(-\eta)\bar{\omega} |\;
 (\tilde{\mu},\nu,\lambda,\eta)                 \nonumber\\
\in \widetilde{\mathcal {P}}_{even}\times \mathcal {P}_{odd}\times
\mathcal {P}_{odd}\times \mathcal {P}_{odd} \};
\end{eqnarray}
\item[{\rm (iii)}] $M_{\psi}$ has the universal property in the sense that
for any Whittaker module $M$ of type $\psi$ generated by $\omega'$,
there is a surjective module homomorphism $\varphi: M_{\psi}\rightarrow M$
such that $u\omega\mapsto u\omega'$, for  $ u\in
U(\mathfrak{b}^{-})$.  $M_{\psi}$ is called the universal
Whittaker module of type $\psi$;
\item[{\rm (iv)}] $L_{\psi,\xi}$ has the universal property in the sense that  for any Whittaker module $\bar{M}$ of $\widehat{H}_{4}[\tau]$ of type $\psi$ generated by a Whittaker vector $\bar{\omega'}$, such that ${\bf k}$ acts as the scalar $\xi$,
there is a surjective module homomorphism $\bar{\varphi}: L_{\psi, \xi}\rightarrow \bar{M}$
such that $u\bar{\omega}\mapsto u\bar{\omega'}$, for  $ u\in
U(\widehat{H}_{4}[\tau]^{(-)}\oplus \mathbb{C}(a+b)(0))$.
\end{itemize}
\end{prop}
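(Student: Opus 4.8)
The plan is to treat $M_\psi$ as an induced module and to deduce every assertion from the Poincar\'e--Birkhoff--Witt (PBW) theorem together with the universal property of induction. First I would apply PBW to the vector-space decomposition $\widehat{H}_{4}[\tau]=\mathfrak{b}^{-}\oplus\widehat{H}_{4}[\tau]^{(+)}$: the algebra $U(\widehat{H}_{4}[\tau])$ is then free as a right $U(\widehat{H}_{4}[\tau]^{(+)})$-module with basis the ordered PBW monomials in $\mathfrak{b}^{-}$. Consequently the natural map $U(\mathfrak{b}^{-})\to M_\psi$, $u\mapsto u\otimes 1$, is a linear isomorphism. Choosing the total order on the homogeneous basis of $\mathfrak{b}^{-}$ so that the ${\bf k}$-factor comes first, followed by the $(a+b)$-, $(a-b)$-, $d$- and $c$-modes in increasing absolute degree, the resulting standard monomials are exactly the elements ${\bf k}^{t}(a+b)(-\tilde{\mu})(a-b)(-\nu)d(-\lambda)c(-\eta)\omega$ displayed in (ii); this establishes the basis claim for $M_\psi$.

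For (i) a direct computation shows that $\omega=1\otimes 1$ is a Whittaker vector: for $x\in\widehat{H}_{4}[\tau]^{(+)}$ we have $x\omega=1\otimes x\cdot 1=\psi(x)(1\otimes 1)=\psi(x)\omega$ from the $\widehat{H}_{4}[\tau]^{(+)}$-module structure of $\mathbb{C}_{\psi}$, and $\omega$ generates $M_\psi$ by construction; passing to the quotient, $\bar{\omega}$ inherits both properties. The basis of $L_{\psi,\xi}$ in (ii) then follows by noting that, under the identification $M_\psi\cong U(\mathfrak{b}^{-})$, the submodule $({\bf k}-\xi)M_\psi$ corresponds to $({\bf k}-\xi)U(\mathfrak{b}^{-})$; since ${\bf k}$ is central and $U(\mathfrak{b}^{-})\cong\mathbb{C}[{\bf k}]\otimes U(\widehat{H}_{4}[\tau]^{(-)}\oplus\mathbb{C}(a+b)(0))$, the quotient removes the ${\bf k}$-dependence and leaves precisely the monomials with the ${\bf k}^{t}$-factor stripped off.

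For (iii) I would invoke the universal property of induction. Given a type-$\psi$ Whittaker module $M$ generated by $\omega'$, the assignment $\alpha\mapsto\alpha\omega'$ is a $\widehat{H}_{4}[\tau]^{(+)}$-homomorphism $\mathbb{C}_{\psi}\to M$ (this is exactly the condition that $\omega'$ be a Whittaker vector of type $\psi$), so by Frobenius reciprocity there is a unique $\widehat{H}_{4}[\tau]$-homomorphism $\varphi:M_\psi\to M$ with $\varphi(\omega)=\omega'$; it satisfies $\varphi(u\omega)=u\omega'$ and is surjective because $\omega'$ generates $M$. Part (iv) is then immediate: for $\bar{M}$ as in the statement ${\bf k}$ acts by $\xi$, so $\varphi$ annihilates $({\bf k}-\xi)M_\psi$ and therefore factors through $L_{\psi,\xi}=M_\psi/({\bf k}-\xi)M_\psi$, yielding the desired surjection $\bar{\varphi}$ with $\bar{\varphi}(u\bar{\omega})=u\bar{\omega'}$ for $u\in U(\widehat{H}_{4}[\tau]^{(-)}\oplus\mathbb{C}(a+b)(0))$, the ${\bf k}$-modes now being redundant since ${\bf k}$ acts as the scalar $\xi$.

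The only genuinely technical point is the linear independence asserted in (ii), which rests entirely on PBW and on the freeness of $U(\widehat{H}_{4}[\tau])$ over $U(\widehat{H}_{4}[\tau]^{(+)})$. The main care needed is to fix the total order on the homogeneous basis of $\mathfrak{b}^{-}$ so that it matches the partition bookkeeping of $\tilde{\mu},\nu,\lambda,\eta$; once this is done the standard monomials of $U(\mathfrak{b}^{-})$ biject with the claimed spanning set, and everything else in the proposition is formal.
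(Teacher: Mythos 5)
Your argument is correct and follows essentially the same route as the paper: the paper declares (i)--(iii) obvious (your PBW/Frobenius-reciprocity details are exactly the standard justification being omitted) and proves (iv) precisely as you do, by taking the surjection $\varphi:M_{\psi}\rightarrow\bar{M}$ from (iii) and observing that it kills $({\bf k}-\xi)M_{\psi}$ because ${\bf k}$ acts on $\bar{M}$ by $\xi$, hence factors through $L_{\psi,\xi}$. No gaps.
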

\begin{proof} (i)-(iii) is obvious. We only prove (iv).  Let $\bar{M}$ be a Whittaker module of $\widehat{H}_{4}[\tau]$ of type $\psi$ generated by a cyclic  Whittaker vector $\bar{\omega'}$,  such that ${\bf k}$ acts as the scalar $\xi$. By (iii), there is a surjective module homomorphism $\varphi$ from $M_{\psi}$ to $\bar{M}$ such that $\varphi(\omega)=\bar{\omega'}$. Since on $\bar{M}$, ${\bf k}$ acts as the scalar $\xi$, we know that $\varphi( M_{\psi}({\bf k}-\xi)\omega)=0$. So $\varphi$ induce a surjective module homomorphism $\bar{\varphi}$ from $L_{\psi,\xi}$ to $\bar{M}$ such that $\bar{\varphi}(\bar{\omega})=\bar{\omega'}$.
\end{proof}

\begin{rem}
For any $x\in \widehat{H}_{4}[\tau]^{(+)}$,
$\omega'=u\bar{\omega}$,  $u\in U(\widehat{H}_{4}[\tau]^{(-)}\oplus \mathbb{C}(a+b)(0))$, we have
\begin{eqnarray*}
 &&(x-\psi(x))\omega'=[x, u]\bar{\omega}.
\end{eqnarray*}
\end{rem}

\begin{lem}\label{lem2.2}
For $n\in 2\mathbb{Z}_{+}, m\in 2\mathbb{N}+1$,
$\tilde{\mu}=(\mu_{1}, \mu_{2},\cdots,  \mu_{s})\in\tilde{\mathcal {P}}_{even}$,
  $\nu=(\nu_{1}, \nu_{2},\cdots, \nu_{r})\in
\mathcal {P}_{odd}$, the following formulas hold.
\begin{eqnarray}
(a+b)(n)(a+b)(-\tilde{\mu})=\sum_{i=1}^{s} 2n\delta_{n,\mu_{i}}{\bf
k}(a+b)(-\tilde{\mu}'^{(i)})+(a+b)(-\tilde{\mu})(a+b)(n),
\end{eqnarray}
where $|\tilde{\mu}'^{(i)}|=|\tilde{\mu}|-n$ and $\#(\tilde{\mu}'^{(i)})<\#(\tilde{\mu})$;
\begin{eqnarray}
(a+b)(n)(a-b)(-\nu)=-2\sum_{i=1}^{r}(a-b)(-\nu'^{(i)})c(n_{i})+(a-b)(-\nu)(a+b)(n),
\end{eqnarray}
where $|\nu'^{(i)}|-n_{i}=|\nu|-n$, $n_{i}< n$ and $n_{i}\in
2\mathbb{Z}+1$, $\#(\nu'^{(i)})<\#(\nu)$;

\begin{eqnarray}
(a-b)(m)(a+b)(-\tilde{\mu})=2\sum_{i=1}^{s}(a+b)(-\tilde{\mu}'^{(i)})c(m_{i})+(a+b)(-\tilde{\mu})(a-b)(m),
\end{eqnarray}
where $|\tilde{\mu}'^{(i)}|-m_{i}=|\tilde{\mu}|-m$, $m_{i}\leq m$
($m_{i}\in 2\mathbb{Z}+1$), and ${\mu}'^{(i)}(0)<\mu(0)$ if $m_{i}=
m$, $\#(\tilde{\mu}'^{(i)})<\#(\tilde{\mu})$;
\begin{eqnarray}
(a-b)(m)(a-b)(-\nu)=-\sum_{i=1}^{r} 2m\delta_{m,\nu_{i}}{\bf
k}(a-b)(-\nu'^{(i)})+(a-b)(-\nu)(a-b)(m),
\end{eqnarray}
where $|\nu'^{(i)}|=|\nu|-m$ and $\#(\nu'^{(i)})<\#(\nu)$.
\end{lem}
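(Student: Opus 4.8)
The plan is to derive all four identities from the single defining commutation relation $[x(p),y(q)]=[x,y](p+q)+p(x,y)\delta_{p+q,0}{\bf k}$ of $\widehat{H}_{4}[\tau]$ together with the four relevant structure constants, and then to expand by induction on the number of negative-mode factors. First I would record the brackets and form values for the combinations $a+b$ and $a-b$ used throughout: from $[a,b]=c$, $[d,a]=a$, $[d,b]=-b$ one gets $[a+b,a+b]=[a-b,a-b]=0$ and $[a\pm b,a\mp b]=\mp 2c$, while the form gives $(a+b,a+b)=2$, $(a-b,a-b)=-2$ and $(a+b,a-b)=0$; one also has $[c,a\pm b]=0$ and $(c,a\pm b)=0$. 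Substituting into the affine bracket yields the four elementary moves
\begin{align*}
&[(a+b)(n),(a+b)(-\mu_i)]=2n\,\delta_{n,\mu_i}{\bf k}, \qquad [(a+b)(n),(a-b)(-\nu_i)]=-2c(n-\nu_i),\\
&[(a-b)(m),(a+b)(-\mu_i)]=2c(m-\mu_i), \qquad [(a-b)(m),(a-b)(-\nu_i)]=-2m\,\delta_{m,\nu_i}{\bf k}.
\end{align*}

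The key structural observation, which makes the expansion clean, is that every correction term produced above is either the central element ${\bf k}$ or a $c$-mode, and by $[c,a\pm b]=0$, $(c,a\pm b)=0$ together with the centrality of ${\bf k}$, these correction terms commute with every remaining factor $(a+b)(-\mu_j)$ and $(a-b)(-\nu_j)$. Consequently, moving a single positive mode through the product $(a+b)(-\tilde{\mu})$ or $(a-b)(-\nu)$ produces no nested brackets: one obtains a plain Leibniz-type sum over the positions at which the commutator is taken, plus the fully reordered term in which the positive mode has passed all the way to the right. I would make this precise by induction on $s$ (resp.\ $r$): peel off the leftmost factor, apply the relevant elementary move, commute the resulting ${\bf k}$ or $c$-mode to its stated position, and invoke the inductive hypothesis on the shorter product. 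This directly produces the four displayed formulas, with the summation ranging over the deleted factor.

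The remaining work is bookkeeping of the index data. Deleting the $i$-th factor from $\tilde{\mu}$ (resp.\ $\nu$) defines $\tilde{\mu}'^{(i)}$ (resp.\ $\nu'^{(i)}$), which lowers $\#$ by one, giving $\#(\tilde{\mu}'^{(i)})<\#(\tilde{\mu})$ and $\#(\nu'^{(i)})<\#(\nu)$; the weight identities then read off from $|\tilde{\mu}'^{(i)}|=|\tilde{\mu}|-\mu_i$ and $|\nu'^{(i)}|=|\nu|-\nu_i$. For the two ${\bf k}$-formulas the $\delta$ forces $\mu_i=n$ (resp.\ $\nu_i=m$), yielding $|\tilde{\mu}'^{(i)}|=|\tilde{\mu}|-n$ and $|\nu'^{(i)}|=|\nu|-m$. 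For the two $c$-formulas the produced mode is $n_i=n-\nu_i$ (resp.\ $m_i=m-\mu_i$), which is odd since $n,m$ and $\nu_i,\mu_i$ have opposite parities, and one checks $|\nu'^{(i)}|-n_i=|\nu|-n$ and $|\tilde{\mu}'^{(i)}|-m_i=|\tilde{\mu}|-m$. I expect the only genuinely delicate point to be the boundary case in the third identity: since $\mu_i$ may equal $0$, one has $m_i\le m$ with equality precisely when $\mu_i=0$, and in that case the deleted factor is a $0$-mode, so $\mu'^{(i)}(0)<\mu(0)$, which is exactly the side condition recorded in the statement. Everything else is routine verification that the commuted $c$-modes and ${\bf k}$'s land in the asserted positions.
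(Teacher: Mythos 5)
Your proposal is correct and follows essentially the same route as the paper: an induction on the number of negative-mode factors, using the four elementary brackets and the fact that the resulting correction terms (${\bf k}$ or a $c$-mode) commute with all remaining factors, so the expansion is a plain Leibniz sum. The paper peels off the rightmost factor rather than the leftmost and writes out only the case of (2.5) explicitly, but the argument is the same; your explicit verification of the index bookkeeping (in particular the boundary case $m_i=m$ forcing $\mu_i=0$ in the third identity) fills in details the paper leaves to the reader.
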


\begin{proof} We give a proof of (2.5) by induction on  $\#(\tilde{\mu})$. The proof for the rest formulas are similar.
Let $\#(\tilde{\mu})=1$ and $\tilde{\mu}=(\mu_{1})$. Then
\begin{eqnarray*}
(a+b)(n)(a+b)(-\tilde{\mu})= 2n\delta_{n,\mu_{1}}{\bf
k}+(a+b)(-\tilde{\mu})(a+b)(n).
\end{eqnarray*}
In this case $\#(\tilde{\mu}'^{(i)})=0$ and $(a+b)(-\tilde{\mu}'^{(i)})=1$ in (2.5).

Suppose that (2.5) holds for $\#(\tilde{\mu})< s$. Let $\#(\tilde{\mu})=s$ and $\tilde{\mu}=(\mu_{1}, \mu_{2},\cdots,  \mu_{s})$. We denote $(a+b)(-\tilde{\mu})=(a+b)(-\tilde{\mu}')(a+b)(-\mu_{s})$,
where $\tilde{\mu}'=(\mu_{1}, \mu_{2},\cdots,  \mu_{s-1})$.  Then
\begin{eqnarray*}
&&(a+b)(n)(a+b)(-\tilde{\mu})\\
&=&[\sum_{i=1}^{s-1} 2n\delta_{n,\mu_{i}}{\bf k}(a+b)(-\tilde{\mu}''^{(i)})+(a+b)(-\tilde{\mu}')(a+b)(n)](a+b)(-\mu_{s})\\
&=& \sum_{i=1}^{s-1} 2n\delta_{n,\mu_{i}}{\bf k}(a+b)(-\tilde{\mu}''^{(i)})(a+b)(-\mu_{s}) +  2n\delta_{n,\mu_{s}}{\bf k}(a+b)(-\tilde{\mu}')+(a+b)(-\tilde{\mu})(a+b)(n)\\
&=& \sum_{i=1}^{s} 2n\delta_{n,\mu_{i}}{\bf
k}(a+b)(-\tilde{\mu}'^{(i)})+(a+b)(-\tilde{\mu})(a+b)(n),
\end{eqnarray*}
where $(a+b)(-\tilde{\mu}''^{(i)})=(a+b)(-\mu_{1})\cdots \widehat{(a+b)(-\mu_{i})}\cdots (a+b)(-\mu_{s-1})$,
$(a+b)(-\tilde{\mu}'^{(i)})=(a+b)(-\mu_{1})\cdots \widehat{(a+b)(-\mu_{i})}\cdots(a+b)(-\mu_{s-1})(a+b)(-\mu_{s})$
and $\widehat{(a+b)(-\mu_{i})}$ means this factor is deleted.
 (2.5) is proved.
\end{proof}

\section{Whittaker vectors in  $L_{\psi,\xi}$ and $M_{\psi}$ for $\psi(c(1))\neq 0$ and $\xi\neq 0$}
\setcounter{equation}{0}

In this section,  it is  always assumed that $\psi(c(1))\neq 0$,
meaning that $\psi$ is nonsingular. Let $M_{\psi}$ and $L_{\psi,
\xi}$ defined by (2.1) and (2.2) be the Whittaker modules for the
twisted affine Nappi-Witten Lie algebra $\widehat{H}_{4}[\tau]$. The
key results of this section are shown in Theorem \ref{thm3.7} and Corollary \ref{coro3.8}
in which the Whittaker  vectors in $L_{\psi, \xi}$ and $M_{\psi}$
are characterized.

Applying Lemma \ref{lemma2.2} to $\widehat{H}_{4}[\tau]$, we have
\begin{lem}
Let $\psi(c(1))\neq 0$  and $L_{\psi,\xi}$ be a   Whittaker module for
$\widehat{H}_{4}[\tau]$ defined by (2.2). Then  the Whittaker
vectors in $L_{\psi,\xi}$ are all of type $\psi$.
\end{lem}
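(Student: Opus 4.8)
The plan is to obtain this as an immediate application of Lemma~\ref{lemma2.2}, taking $\mathfrak{g}=\widehat{H}_{4}[\tau]$, $\mathfrak{n}=\widehat{H}_{4}[\tau]^{(+)}$ and $V=L_{\psi,\xi}$. The first hypothesis of that lemma, namely that $L_{\psi,\xi}$ is a type $\psi$ Whittaker module, is already provided by Proposition~\ref{prop2.1}(i), whose cyclic Whittaker vector is $\bar{\omega}$. Thus the entire argument reduces to checking the remaining hypothesis: that the adjoint action of $\mathfrak{n}=\widehat{H}_{4}[\tau]^{(+)}$ on the quotient $\widehat{H}_{4}[\tau]/\widehat{H}_{4}[\tau]^{(+)}$ is locally nilpotent.

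To verify this I would exploit the $\mathbb{Z}$-grading of $\widehat{H}_{4}[\tau]$ by the power of $t$, setting $\deg x(n)=n$ and $\deg {\bf k}=0$. By the triangular decomposition recalled above, $\mathfrak{n}=\widehat{H}_{4}[\tau]^{(+)}$ sits in strictly positive degrees (the smallest being $1$, attained by $(a-b)(1),c(1),d(1)$), whereas the complement $\mathfrak{b}^{-}=\widehat{H}_{4}[\tau]^{(-)}\oplus\widehat{H}_{4}[\tau]^{(0)}$, which represents the quotient, lies in degrees $\leq 0$, with its degree-zero part spanned by $(a+b)(0)$ and ${\bf k}$. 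The bracket $[x(m),y(n)]=[x,y](m+n)+m(x,y)\delta_{m+n,0}{\bf k}$ then shows that for homogeneous $x\in\mathfrak{n}$ the operator $\ad(x)$ raises degree by $\deg x\geq 1$, while it annihilates ${\bf k}$ outright.

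Now take a homogeneous representative $y$ of degree $e\leq 0$ in the quotient. Each successive application of an $\ad(x_{i})$ with $x_{i}\in\mathfrak{n}$ either sends the current vector into $\mathfrak{n}$ (zero in the quotient) or raises its degree by at least $1$; in particular, after at most $|e|+1$ applications the degree exceeds $0$, so the result lies in $\widehat{H}_{4}[\tau]^{(+)}$ and vanishes in the quotient. Writing an arbitrary element of the quotient as a finite sum of homogeneous pieces and taking $s$ to be the largest of the resulting bounds, one gets $\ad(x_{1})\cdots\ad(x_{s})\bar{y}=0$ for all $x_{i}\in\mathfrak{n}$, which is precisely local nilpotency. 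Both hypotheses being met, Lemma~\ref{lemma2.2} applies and yields the claim.

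The only delicate point, which I would flag as the main (though mild) obstacle, is the degree-zero layer of the quotient spanned by $(a+b)(0)$ and ${\bf k}$: one must make sure that iterating $\ad(\mathfrak{n})$ cannot be trapped there. This is settled by the two remarks above, since $[x(m),(a+b)(0)]$ has strictly positive degree $m$ and therefore lands in $\mathfrak{n}$, while ${\bf k}$ is central and hence killed immediately; neither yields a nonzero degree-zero output, so the iteration genuinely escapes to positive degree and terminates.
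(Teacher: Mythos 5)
Your proposal is correct and follows exactly the paper's route: the paper proves this lemma simply by ``applying Lemma~\ref{lemma2.2} to $\widehat{H}_{4}[\tau]$,'' leaving the hypothesis check implicit. Your verification that $\ad\bigl(\widehat{H}_{4}[\tau]^{(+)}\bigr)$ acts locally nilpotently on the quotient via the $\mathbb{Z}$-grading (including the correct handling of the degree-zero layer $\mathbb{C}(a+b)(0)\oplus\mathbb{C}{\bf k}$) supplies precisely the detail the paper omits.
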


We are now in a position to give the main result of this section.
\begin{theorem}\label{thm3.7}
Assume that  $\psi(c(1))\neq 0$ and  $\xi \in \mathbb{C}^{*}$.  Let
$\bar{\omega}=\overline{1\otimes 1}\in L_{\psi, \xi}$.  Then

{\rm (1)} $\omega'\in
L_{\psi, \xi}$ is a Whittaker vector if and only if
$\omega'=u\bar{\omega}$ for some $u\in \mathbb{C}^{*}$;

{\rm (2)}   $L_{\psi,\xi}$ is irreducible;

{\rm (3)}   any  Whittaker $\widehat{H}_{4}[\tau]$-module of type $\psi$ for which $\psi(c(1))\neq 0$ and  ${\bf k}$ acts by a non-zero scalar $\xi\in \C$ is irreducible and isomorphic to $L_{\psi,
\xi}$.
\end{theorem}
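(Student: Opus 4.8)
Parts (2) and (3) will follow formally from (1). For (2): once (1) shows that the space of Whittaker vectors of $L_{\psi,\xi}$ is exactly $\mathbb{C}\bar{\omega}$, hence one-dimensional, Lemma \ref{lemma2.1}(iii) gives irreducibility; its hypothesis (local nilpotence of the adjoint action of $\widehat{H}_{4}[\tau]^{(+)}$ on $\widehat{H}_{4}[\tau]/\widehat{H}_{4}[\tau]^{(+)}$) holds because bracketing with a positive element raises the $t$-degree and so lands back in $\widehat{H}_{4}[\tau]^{(+)}$ after finitely many steps. (Equivalently, by Lemma \ref{lemma2.1}(ii) every nonzero submodule contains a type-$\psi$ Whittaker vector, which by (1) is a nonzero multiple of $\bar{\omega}$ and therefore generates $L_{\psi,\xi}$.) For (3): given a type-$\psi$ Whittaker module $\bar{M}$ on which $\mathbf{k}$ acts by $\xi$, Proposition \ref{prop2.1}(iv) yields a surjection $L_{\psi,\xi}\twoheadrightarrow\bar{M}$ carrying $\bar{\omega}$ to the cyclic vector $\bar{\omega'}\neq 0$; since $L_{\psi,\xi}$ is simple and the map is nonzero, it is an isomorphism. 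The forward implication of (1) is trivial, so everything reduces to the converse: every Whittaker vector of $L_{\psi,\xi}$ is a scalar multiple of $\bar{\omega}$.

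Let $\omega'$ be a Whittaker vector. By Lemma \ref{lemma2.2} it is automatically of type $\psi$, so $(x-\psi(x))\omega'=0$ for all $x\in\widehat{H}_{4}[\tau]^{(+)}$; writing $\omega'=u\bar{\omega}$ with $u\in U(\widehat{H}_{4}[\tau]^{(-)}\oplus\mathbb{C}(a+b)(0))$, the Remark after Proposition \ref{prop2.1} rephrases this as $[x,u]\bar{\omega}=0$. Expand $\omega'$ in the basis of $L_{\psi,\xi}$. The first step is to kill all $d(-\lambda)$-factors: since $[c(m),\cdot]$ vanishes except for the pairing $[c(m),d(-m)]=m\mathbf{k}$, each $c(m)$ commutes past every $(a\pm b)$- and $c$-factor and acts on a basis vector merely by deleting one $d(-m)$ with the scalar $m\xi$ (times its multiplicity), while $c(m)\bar{\omega}=\psi(c(m))\bar{\omega}$. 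For odd $m\ge 3$ we have $\psi(c(m))=0$, so $c(m)\omega'=0$; since deleting one $d(-m)$ is injective on basis vectors and $\xi\neq 0$, all coefficients with $\lambda(m)\ge 1$ vanish, and then $(c(1)-\psi(c(1)))\omega'=0$ removes the remaining $d(-1)$-factors. Hence $\omega'$ carries no $d(-\lambda)$, i.e.\ $\lambda=\bar{0}$ throughout.

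The second step clears the $(a\pm b)$-factors. Set $|P|=|\tilde{\mu}|+|\nu|+|\eta|$ and let $D$ be the top degree occurring in $\omega'$. The formulas of Lemma \ref{lem2.2} show that, for odd $m\ge 3$, $(a-b)(m)\omega'=0$ has two kinds of terms: a deletion of $(a-b)(-m)$ (coefficient $\propto\xi$, degree drop $m$) and a cross term deleting $(a+b)(-(m-1))$ with coefficient $\propto\psi(c(1))$ (degree drop $m-1$); dually $(a+b)(n)\omega'=0$ ($n\ge 2$ even) deletes $(a+b)(-n)$ or, in its cross term, $(a-b)(-(n-1))$ with coefficient $\propto\psi(c(1))$. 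Reading off the leading ($D-m+1$, resp.\ $D-n+1$) component and using $\psi(c(1))\neq 0$ forces every top-degree coefficient carrying a factor $(a+b)(-e)$ $(e\ge 2)$ or $(a-b)(-o)$ $(o\ge 1$ odd$)$ to vanish; the factors $(a+b)(0)$ are disposed of by the same mechanism via $(a-b)(1)$, whose $(a+b)(0)$-deleting cross term has coefficient $\propto\psi(c(1))$ and is degree-preserving, so the degree-$D$ part of $((a-b)(1)-\psi((a-b)(1)))\omega'=0$ kills the coefficients with $\mu(0)\ge 1$. Thus the top-degree part of $\omega'$ is supported on pure-$c$ vectors $c(-\eta)\bar{\omega}$. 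I would organize the whole step as an induction, stratifying first by the number $\#(\tilde{\mu})+\#(\nu)$ of $(a\pm b)$-factors --- which the operators $c(m),(a+b)(n),(a-b)(m)$ never increase --- and then by degree, so that at each stratum the same leading-coefficient computation applies.

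The last step removes the $c(-\eta)$-factors, and this is where the main difficulty lies. The only operator sensing $c(-m)$ is $d(m)$ (the pairing $[d(m),c(-m)]=m\mathbf{k}$), and on a pure-$c$ vector it acts cleanly --- deleting one $c(-m)$ with scalar $m\xi$ and returning $\psi(d(m))$ on $\bar{\omega}$ --- which would finish the argument at once. The obstacle is that $d(m)$ is badly behaved on terms still carrying $(a\pm b)$-factors: it interchanges $(a+b)\leftrightarrow(a-b)$ and, after one further contraction, can convert a pair of $(a\pm b)$-factors into a $c$-factor, so the pure-$c$ output of $d(m)$ is coupled to lower terms. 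The plan is to defeat this by arranging the induction of the second step to clear all $(a\pm b)$-factors (at every degree, not merely the top one) before $d(m)$ is ever applied; on the resulting vectors (free of $d$- and $(a\pm b)$-factors) the relation $(d(m)-\psi(d(m)))\omega'=0$ reduces to an injective, $\xi$-invertible deletion of $c(-m)$ for all odd $m\ge 3$, together with the $m=1$ case, forcing every $c(-\eta)$ with $\eta\neq\bar{0}$ to vanish and leaving $\omega'\in\mathbb{C}\bar{\omega}$. The nonsingularity $\psi(c(1))\neq 0$ and the hypothesis $\xi\neq 0$ are precisely what make all the leading coefficients invertible; keeping track of the cross terms so that the two nested inductions close is the only real work.
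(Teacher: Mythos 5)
Your proposal is correct and follows essentially the same route as the paper: eliminate the $d$-factors first (via the $c(m)$'s and $\xi\neq 0$), then the $(a\pm b)$-factors (via the $c(1)$-producing contractions and $\psi(c(1))\neq 0$), then the $c(-\eta)$-factors (via the $d(m)$'s and $\xi\neq 0$), with (2) and (3) deduced from (1) exactly as in the paper via Lemma \ref{lemma2.1}(iii) and Proposition \ref{prop2.1}(iv). The only differences are organizational: the paper packages your first step as a decomposition of $L_{\psi,\xi}$ into irreducible highest-weight modules over the Heisenberg subalgebra spanned by the $c(2m+1)$, $d(-2m-1)$ and $\mathbf{k}$, and in the second step it applies a single operator whose index exceeds everything occurring (so the $\xi$-deletion terms vanish outright and only the $c(1)$-cross term survives) rather than your degree/count filtration.
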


\begin{proof} (1)
It is clear that if $\omega'=u\bar{\omega}$ for some $u\in \mathbb{C}^{*}$, then $\omega'$ is a Whittaker vector.
We now  prove the necessity.
 Note that for $m, n\in \mathbb{Z}_{+}$,
\begin{eqnarray*}
[c(2m+1),d(-2n-1)]=(2m+1)\delta_{m,n}{\bf k}.
\end{eqnarray*}
Then
 \begin{eqnarray*}
 {\frak s}=\mathop{\bigoplus}\limits_{m\in \mathbb{Z}_{+}}(\mathbb{C} c(2m+1)\oplus\mathbb{C}
d(-2m-1))\oplus \mathbb{C}{\bf k}
\end{eqnarray*}
is a Heisenberg algebra and $L_{\psi, \xi}$ can be viewed as an ${\frak s}$-module
on which ${\bf k}$ acts as $\xi \neq 0$.
Since every highest weight ${\frak s}$-module
generated by one element with $\k$ acting as a non-zero scalar is
irreducible, it follows  that $L_{\psi, \xi}$ can be decomposed into a direct
sum of irreducible highest weight modules of ${\frak s}$ with
the highest weight vectors
 \begin{eqnarray*}
\{(a+b)(-\tilde{\mu})(a-b)(-\nu)c(-\eta)d(-1)^{t}\bar{\omega}\; | \;  (\tilde{\mu},\nu,\eta)\in \widetilde{\mathcal
{P}}_{even}\times \mathcal {P}_{odd}\times \mathcal {P}_{odd}, t\in \mathbb{N} \}.
\end{eqnarray*}
Thus if $\bar{\omega}\neq \omega' \in L_{\psi, \xi}$ is a Whittaker
vector, then $\omega'$ can be written as
\begin{eqnarray*}
&\omega'=&\sum_{i\in I} x_{i} (a+b)(-\tilde{\mu}^{(i)})(a-b)(-\nu^{(i)})c(-\eta^{(i)})d(-1)^{t_{i}}\bar{\omega},
\end{eqnarray*}
where $I$ is a finite index set, $x_{i} \in \mathbb{C}^{*}$.

If there  exists $i\in I$ such that $t_{i} > 0$,  by considering the action of $c(1)$ on $\omega'$, we obtain
\begin{eqnarray*}
&&(  c(1)-\psi(c(1)) )\omega'\\
&=& \sum_{i\in I} x_{i} t_{i} \xi (a+b)(-\tilde{\mu}^{(i)})(a-b)(-\nu^{(i)})c(-\eta^{(i)})d(-1)^{t_{i}-1}\bar{\omega}\neq 0,
\end{eqnarray*}
a contradiction.  Then
\begin{eqnarray*}
&\omega'=&\sum_{i\in I} x_{i} (a+b)(-\tilde{\mu}^{(i)})(a-b)(-\nu^{(i)})c(-\eta^{(i)})\bar{\omega},
\end{eqnarray*}
where $I$ is a finite index set, $x_{i} \in \mathbb{C}^{*}$.
For $i\in I$, let
\begin{eqnarray*}
&& \tilde{\mu}^{(i)}=(0^{a_{i0}},2^{a_{i1}},\cdots, (2n)^{a_{in}} ),\\
&& \nu^{(i)}=(1^{b_{i0}},3^{b_{i1}},\cdots, (2m+1)^{b_{im}} ),\\
&& \eta^{(i)}= (1^{c_{i0}},3^{c_{i1}},\cdots, (2l+1)^{c_{il}} ),
\end{eqnarray*}
where  $n,m,l,a_{ij},b_{ij},c_{ij}\in \mathbb{N}$.
Then $\omega'$ can be written as
\begin{eqnarray*}
&\omega'=&\sum_{i\in I} x_{i} (a+b)(0)^{a_{i0}}(a+b)(-2)^{a_{i1}}\cdots (a+b)(-2n)^{a_{in}} \cdot\\
&&(a-b)(-1)^{b_{i0}}(a-b)(-3)^{b_{i1}} \cdots (a-b)(-2m-1)^{b_{im}} c(-1)^{c_{i0}}c(-3)^{c_{i1}}\cdots c(-2l-1)^{c_{il}}\bar{\omega},
\end{eqnarray*}
where $I$ is a finite index set, $n,m,l,a_{ij},b_{ij},c_{ij}\in \mathbb{N}$.

{\bf Case I}  Suppose $n\geq 0$ and $\{ a_{in} \; | \;  i\in I\}\neq \{0\}$.

If $m\geq n$ and there exists $i\in I$ such that $b_{im}\neq 0$,  by (2.5) and (2.6), we have
\begin{eqnarray*}
&&((a+b)(2m+2)-\psi((a+b)(2m+2)))\omega'\\
&=&\sum_{i\in I} x_{i}( -2\psi(c(1)) )b_{im}  (a+b)(0)^{a_{i0}}(a+b)(-2)^{a_{i1}}\cdots (a+b)(-2n)^{a_{in}}\cdot\\
 &&(a-b)(-1)^{b_{i0}}(a-b)(-3)^{b_{i1}}\cdots (a-b)(-2m-1)^{b_{im}-1} c(-\eta^{(i)})\bar{\omega}\neq  0,
\end{eqnarray*}
 a contradiction.

If $m< n$ and there exists $i\in I$ such that $b_{im}\neq 0$, by (2.7) and (2.8), we obtain
\begin{eqnarray*}
&&((a-b)(2n+1)-\psi((a-b)(2n+1)))\omega'\\
&=&\sum_{i\in I} x_{i}2\psi(c(1)) a_{in}  (a+b)(0)^{a_{i0}}(a+b)(-2)^{a_{i1}}\cdots (a+b)(-2n)^{a_{in}-1}\cdot\\
 &&(a-b)(-1)^{b_{i0}}(a-b)(-3)^{b_{i1}}\cdots (a-b)(-2m-1)^{b_{im}-1} c(-\eta^{(i)})\bar{\omega}\neq  0,
\end{eqnarray*}
 a contradiction.

{\bf Case II}  Suppose $m\geq 0$ and $\{ b_{im} \; | \;  i\in I\}\neq \{0\}$ and $\{ a_{in} \; | \;  i\in I\}= \{0\}$ for any $n\geq 0$.

Then $v$ has the following form
\begin{eqnarray*}
\omega'&=&\sum_{i\in I} x_{i}
(a-b)(-1)^{b_{i0}}(a-b)(-3)^{b_{i1}} \cdots (a-b)(-2m-1)^{b_{im}}
c(-\eta^{(i)})\bar{\omega}.
\end{eqnarray*}
By considering the action of $(a+b)(2m+2)$ on $\omega'$. We also get a contradiction.

{\bf Case III}  Suppose $l\geq 0$ and $\{ c_{il} \; | \;  i\in I\}\neq \{0\}$ and $\{ a_{in} \; | \;  i\in I\}=\{ b_{im} \; | \;  i\in I\}= \{0\}$ for any $n, m\geq 0$.

In this case,
\begin{eqnarray*}
\omega'=\sum_{i\in I} x_{i}
c(-1)^{c_{i0}}c(-3)^{c_{i1}}\cdots c(-2l-1)^{c_{il}}\bar{\omega}.
\end{eqnarray*}
We have
\begin{eqnarray*}
&&(d(2l+1)-\psi(d(2l+1)))\omega'\\
&=& \sum_{i\in I} x_{i} c_{il} (2l+1)\xi c(-1)^{c_{i0}}c(-3)^{c_{i1}}\cdots c(-2l-1)^{c_{il}-1} \bar{\omega}
 \neq  0,
\end{eqnarray*}
 a contradiction.
So we deduce that  $\omega'=u\bar{\omega}$ for some $u\in \mathbb{C}^{*}$, with which we prove (1).

(2) follows from (1) and (iii) of Lemma \ref{lemma2.1}.
(3) follows from (2) and (iv) of Proposition \ref{prop2.1}
\end{proof}

\begin{coro}\label{coro3.8}
Let $\psi(c(1))\neq 0$  and $M_{\psi}$ be a  universal Whittaker
module for $\widehat{H}_{4}[\tau]$, generated by the Whittaker
vector $\omega=1\otimes 1$. Then $v\in M_{\psi}$ is a
Whittaker vector if and only if $v=u\omega$ for some $u\in
\mathbb{C}[{\bf k}]$.
\end{coro}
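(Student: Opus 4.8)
The backward implication I would dispose of first and quickly: if $v=u\omega$ with $u\in\mathbb{C}[{\bf k}]$ nonzero, then because ${\bf k}$ is central and $x\omega=\psi(x)\omega$ for every $x\in\widehat{H}_{4}[\tau]^{(+)}$, one has $xv=u\,x\omega=\psi(x)u\omega=\psi(x)v$; moreover $v\neq0$ since $M_{\psi}$ is free over $\mathbb{C}[{\bf k}]$, so $v$ is a Whittaker vector. The real content is the forward direction, and my plan is to reduce it to Theorem \ref{thm3.7} by passing to the quotients $L_{\psi,\xi}$ for all $\xi\neq0$ simultaneously.

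First I would use Proposition \ref{prop2.1}(ii) to regard $M_{\psi}$ as a free $\mathbb{C}[{\bf k}]$-module on the set $B=\{(a+b)(-\tilde\mu)(a-b)(-\nu)d(-\lambda)c(-\eta)\omega\}$, so that a Whittaker vector $v$ has a unique expansion
\[
v=\sum_{(\tilde\mu,\nu,\lambda,\eta)} f_{\tilde\mu,\nu,\lambda,\eta}({\bf k})\,(a+b)(-\tilde\mu)(a-b)(-\nu)d(-\lambda)c(-\eta)\omega
\]
with $f_{\tilde\mu,\nu,\lambda,\eta}\in\mathbb{C}[{\bf k}]$ and only finitely many nonzero. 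For each $\xi\in\mathbb{C}^{*}$, the canonical map $\overline{\;\cdot\;}\colon M_{\psi}\to L_{\psi,\xi}$ specializes ${\bf k}\mapsto\xi$ and carries $B$ onto the basis of $L_{\psi,\xi}$ recorded in Proposition \ref{prop2.1}(ii); hence $\bar v$ is obtained from the expansion above by replacing each $f_{\tilde\mu,\nu,\lambda,\eta}({\bf k})$ with the scalar $f_{\tilde\mu,\nu,\lambda,\eta}(\xi)$.

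The crux of the argument is then the following. Since $\overline{\;\cdot\;}$ is a module homomorphism and $xv=\psi(x)v$, I get $x\bar v=\psi(x)\bar v$ for all $x\in\widehat{H}_{4}[\tau]^{(+)}$, so $\bar v$ is either $0$ or a Whittaker vector of $L_{\psi,\xi}$. In the latter case Theorem \ref{thm3.7}(1) forces $\bar v\in\mathbb{C}^{*}\bar\omega$, and comparing against the unique basis expansion shows that $f_{\tilde\mu,\nu,\lambda,\eta}(\xi)=0$ for every nonempty quadruple; in the former case all coefficients vanish at $\xi$ a fortiori. Either way, each polynomial $f_{\tilde\mu,\nu,\lambda,\eta}$ indexed by a nonempty quadruple vanishes at every point of $\mathbb{C}^{*}$, hence has infinitely many roots, hence is identically zero. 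This leaves $v=f_{\bar 0,\bar 0,\bar 0,\bar 0}({\bf k})\,\omega$, that is, $v=u\omega$ with $u\in\mathbb{C}[{\bf k}]$; and $u\neq0$ because $v\neq0$.

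I do not expect a serious obstacle beyond Theorem \ref{thm3.7} itself. The two points that need care are the freeness of $M_{\psi}$ over $\mathbb{C}[{\bf k}]$, which is what makes the coefficient polynomials $f$ well defined and makes them specialize correctly under $\overline{\;\cdot\;}$, and the fact that one and the same vector $v$ must specialize compatibly for all $\xi\neq0$ at once; this is precisely what upgrades the pointwise vanishing guaranteed by Theorem \ref{thm3.7} into the identical vanishing of the $f$'s, using that $\mathbb{C}^{*}$ is infinite.
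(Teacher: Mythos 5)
Your proposal is correct and follows essentially the same route as the paper: expand $v$ over the $\mathbb{C}[{\bf k}]$-basis from Proposition \ref{prop2.1}(ii), push down to $L_{\psi,\xi}$, and invoke Theorem \ref{thm3.7}(1) to kill the coefficients of the nontrivial basis elements. The only difference is that you make explicit the step the paper leaves implicit, namely that the coefficient polynomials vanish at \emph{every} $\xi\in\mathbb{C}^{*}$ and hence are identically zero; this is a welcome clarification rather than a divergence.
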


\begin{proof}
Since ${\bf k}$ is the center of $\widehat{H}_{4}[\tau]$, it is easy
to see that $v=u\omega$ is a Whittaker vector if $u\in
\mathbb{C}[{\bf k}]$. Conversely, let $v\in M_{\psi}$ be a Whittaker vector.
By (2.3), we can write
\begin{eqnarray*}
v=\sum\limits_{\tilde{\mu},\nu,\lambda,\eta}p_{\tilde{\mu},\nu,\lambda,\eta}({\bf k})
(a+b)(-\tilde{\mu})(a-b)(-\nu)d(-\lambda)c(-\eta)\omega,
\end{eqnarray*}
where $p_{\tilde{\mu},\nu,\lambda,\eta}({\bf k})\in \mathbb{C}[{\bf k}]$.
There exists a module homomorphism $\varphi
: M_{\psi}\rightarrow L_{\psi,\xi}$ with $\omega \mapsto \bar{\omega}$.
Clearly
\begin{eqnarray*}
\varphi(v)=\sum\limits_{\tilde{\mu},\nu,\lambda,\eta}p_{\tilde{\mu},\nu,\lambda,\eta}(\xi)
(a+b)(-\tilde{\mu})(a-b)(-\nu)d(-\lambda)c(-\eta)\bar{\omega}
\end{eqnarray*}
is a Whittaker vector of $L_{\psi,\xi}$.
Then by Theorem  \ref{thm3.7}, $\varphi(v)\in \mathbb{C}\bar{\omega}$.
Thus $(\tilde{\mu},\nu,\lambda, \eta)=(\bar{0},\bar{0},\bar{0},\bar{0})$ for
any $p_{\tilde{\mu},\nu,\lambda,\eta}({\bf k})\neq 0$, then  $v=u\omega$ for
$u\in \mathbb{C}[{\bf k}]$.
\end{proof}

\section{Whittaker vectors for the case that $\psi(c(1))\neq 0$ and $\xi=0$}
\setcounter{equation}{0}

In this section, we will give all the Whittaker vectors of $L_{\psi,0}$ for which $\psi(c(1))\neq 0$.
 The following is the main result of this section.

\begin{theorem}\label{thm 4.3}
Let $\psi: \widehat{H}_{4}[\tau]^{(+)}\rightarrow \mathbb{C}$ be a Lie
algebra homomorphism such that $\psi(c(1))\neq 0$.   Then
 any Whittaker vector of the Whittaker module $L_{\psi,
0}$ for
$\widehat{H}_{4}[\tau]$  is a non-zero linear combination of elements in $\{\bar{\omega},  c(-\eta)\bar{\omega}\; |\; \eta\in\mathcal {P}_{odd}  \}$.
\end{theorem}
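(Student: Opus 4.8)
The plan is to reduce to type-$\psi$ vectors and then run a downward induction on an energy filtration, using the commutation rules of Lemma \ref{lem2.2} and the degeneracy created by $\mathbf{k}=0$. The easy direction is immediate: each $c(-\eta)\bar\omega$ is a Whittaker vector (on $L_{\psi,0}$ every $c(m)$ is central, and once $\mathbf{k}=0$ the element $c$ commutes with $(a+b),(a-b),d$), and any nonzero linear combination of type-$\psi$ Whittaker vectors is again one. For the converse, Lemma \ref{lemma2.2} applies (the positive part strictly raises the mode grading, so $\ad\,\widehat{H}_{4}[\tau]^{(+)}$ is locally nilpotent on $\widehat{H}_{4}[\tau]/\widehat{H}_{4}[\tau]^{(+)}$), hence every Whittaker vector $\omega'$ is of type $\psi$. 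Writing $\omega'=u\bar\omega$ and invoking the Remark after Proposition \ref{prop2.1}, the conditions become $(x-\psi(x))\omega'=[x,u]\bar\omega=0$, so each positive $x$ acts through the derivation $\ad\,x$ on the creation string.

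The decisive structural fact is that on $L_{\psi,0}$ the central terms of (2.5) and (2.8) vanish, the $(a\pm b)$-modes commute with their own negatives, and $[c(m),d(-k)]=m\delta_{m,k}\mathbf{k}=0$; thus every $c(m)$ acts by the scalar $\psi(c(m))$, equal to $\psi(c(1))\neq0$ for $m=1$ and to $0$ for $m\geq3$. Consequently $\psi(c(1))$ is the only nonzero scalar available for detecting creation operators. Filtering by the non-$c$ energy $E(b)=|\tilde\mu|+|\nu|+|\lambda|$, Lemma \ref{lem2.2} shows every positive operator is $E$-nonincreasing, with leading ($E$-drop-exactly-$k$) symbols of $\ad\,x$ given by
\begin{align*}
&\ad\,(a-b)(k+1):\quad 2\psi(c(1))\,\partial_{(a+b)(-k)}\qquad(k\geq0\ \text{even}),\\
&\ad\,(a+b)(k+1):\quad -2\psi(c(1))\,\partial_{(a-b)(-k)}-\psi((a-b)(1))\,\partial_{d(-k)}\qquad(k\geq1\ \text{odd}),
\end{align*}
where $\partial_{y(-k)}$ removes one factor $y(-k)$. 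The first symbol is clean, so applying $\ad\,(a-b)(k+1)$ and comparing top-level terms, an induction downward on $E$ (and, within a level, from the smallest mode upward) forces every $(a+b)(-\mu)$-factor, $\mu\geq0$, to disappear; this clears the entire $(a+b)$-sector.

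The genuinely hard part is the $d$-sector, and this is where I expect the main obstacle. Because $\mathbf{k}=0$ annihilates $[c,d]$, the mode $d(m)$ commutes with every $d(-k)$, so---unlike the Heisenberg pair $(c,d)$ that drove the irreducibility proof of Theorem \ref{thm3.7} when $\xi\neq0$---no ``$d$-type'' operator can see $d$-content; it can only be reached through $(a+b)(k+1)$ or $(a-b)(k)$. But the one scalar detector, $\ad\,(a+b)(k+1)$, carries the coefficient $\psi((a-b)(1))$, which is not assumed nonzero and, worse, sits in the same $E$-drop as the detection of $(a-b)(-k)$ (second symbol above); thus a single operator cannot separate the $(a-b)$- and $d$-sectors. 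My plan to resolve this is to treat the pair simultaneously after the $(a+b)$-sector has been cleared: applied at the largest occurring $d$-mode $k$, the operator $\ad\,(a-b)(k)$ converts a factor $d(-k)$ into $-(a+b)(0)$ with no competing $c(1)$-scalar (no $(a+b)(-(k-1))$ survives to produce one), giving a second, $\psi((a-b)(1))$-free relation on the $d(-k)$-coefficients; combining it with the coupled relation from $\ad\,(a+b)(k+1)$ solves for the $(a-b)(-k)$- and $d(-k)$-coefficients and forces $\nu=\lambda=\bar0$. The technical heart is the bookkeeping: controlling the lower-order (larger $E$-drop) terms produced at each step, and checking that the auxiliary detector's leading contribution is not contaminated by other paths of the same total $E$-drop---so that, together with $\tilde\mu=\bar0$ from the $(a+b)$-sector, one is left with $\omega'\in\operatorname{span}\{c(-\eta)\bar\omega\}$.
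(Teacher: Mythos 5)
Your reduction to type-$\psi$ vectors, and your observation that on $L_{\psi,0}$ every $c(m)$ with $m>0$ odd acts by the scalar $\psi(c(m))$ (so that the $c(-\eta)$-factors form an inert multiplicity space), are both correct, and you have correctly located the difficulty in the $d$-sector. But the first concrete step of your plan already fails: the claimed leading symbol $2\psi(c(1))\,\partial_{(a+b)(-k)}$ of $\ad\,(a-b)(k+1)$ at $E$-drop exactly $k$ is not clean. On a monomial containing a factor $d(-\lambda)$ with $\lambda<k$, the commutator $[(a-b)(k+1),d(-\lambda)]=-(a+b)(k+1-\lambda)$ produces a positive even mode, which can then hit a factor $(a-b)(-(k-\lambda))$ to yield $2c(1)$, i.e.\ the scalar $2\psi(c(1))$; this chain removes $d(-\lambda)$ and $(a-b)(-(k-\lambda))$, so its total $E$-drop is $\lambda+(k-\lambda)=k$, the same as your first-order term. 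Longer chains through further $d$-factors land at the same drop, because your filtration is by total mode degree and every chain terminating in a mode-$1$ scalar drops $E$ by exactly $k$ regardless of its length. Consequently the top-level coefficient equation has the form $2\psi(c(1))\bigl(x_b+\sum x_{b'}\bigr)=0$, where $b$ carries an $(a+b)(-k)$-factor but the $b'$ carry a $d$- and an $(a-b)$-factor instead; nothing forces $x_b=0$. Indeed the sectors cannot be decoupled in the order you propose: the paper's proof shows that if the top $d$-monomial $d(-1)^{c_{10}}\cdots d(-2m-1)^{c_{1m}}$ occurs, then monomials of the form $d(-1)^{c_{10}}\cdots d(-2m-1)^{c_{1m}-1}(a+b)(-2m_1)(a+b)(-2m+2m_1)$ are \emph{forced} to occur with the nonzero coefficients $c_{1m}$ or $\tfrac12c_{1m}$, so a hypothetical Whittaker vector with $d$-content necessarily carries $(a+b)$-content one level down.

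The second half of your proposal is a program rather than a proof, and it omits the ingredient that actually closes the paper's argument. After the actions of the $(a\pm b)$-modes have pinned down the precise coefficients of the terms with two $(a+b)$-factors (the set $I_4$) and two $(a-b)$-factors (the set $I_5$) accompanying the top $d$-monomial, the paper applies $d(2m+1)-\psi(d(2m+1))$ --- an operator that never appears in your plan --- and the resulting identity $\bigl(\sum_{i\in I_4}x_i-\sum_{j\in I_5}x_j\bigr)\psi(c(1))=0$ is what finally forces $c_{1m}=0$. Your proposed pair of detectors $\ad\,(a-b)(k)$ and $\ad\,(a+b)(k+1)$ at the largest $d$-mode yields coupled relations, but you have neither verified that the system they produce is nondegenerate (it still involves the unconstrained scalar $\psi((a-b)(1))$) nor controlled the same-drop contamination that you yourself flag; that bookkeeping is not a technical afterthought but the substance of the theorem. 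The final step of your plan (the Weyl-algebra argument once all $d$-factors are gone) does match the paper's conclusion and is fine.
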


\begin{proof}
Since $\xi=0$, it is easy to see that $\{ c(-\eta)\bar{\omega}\; |\; \eta\in\mathcal {P}_{odd}  \}$
are Whittaker vectors. Denote by $N$ the submodule of $L_{\psi,0}$ generated by $\{ c(-\eta)\bar{\omega}\; |\; \eta\in\mathcal {P}_{odd}  \}$.

It suffices to prove that $\bar{\omega}$ is the only linear independent Whittaker vector of $L_{\psi,0}/N$.
 For convenience, we still denote by $u$ the image of $u\in L_{\psi,0}$
in $L_{\psi,0}/N$. Assume that $u\in L_{\psi,0}/N$ is a Whittaker vector.  Suppose that $u\neq \bar{\omega}$. Then we may assume that
\begin{eqnarray*}
u &=& \sum_{i\in I}x_{i}d(-1)^{c_{i0}}\cdots d(-2m-1)^{c_{im}} (a+b)(0)^{a_{i0}}\cdots (a+b)(-2n)^{a_{in}}\cdot \\
               &&(a-b)(-1)^{b_{i0}}\cdots (a-b)(-2l-1)^{b_{il}}\bar{\omega},
\end{eqnarray*}
where $I$ is a finite set of index, $m,n,l, c_{ij}, a_{ij}, b_{ij}\in \mathbb{N}$, $x_{i}\in \mathbb{C}^*$. For $i\in I$, set

$$
u_i=d(-1)^{c_{i0}}\cdots d(-2m-1)^{c_{im}} (a+b)(0)^{a_{i0}}\cdots (a+b)(-2n)^{a_{in}}
               (a-b)(-1)^{b_{i0}}\cdots (a-b)(-2l-1)^{b_{il}}\bar{\omega}.
$$
Suppose that there exists $i\in I$ such that $c_{im}> 0$. Let $J\in I$ be such that for $i\in J$,
\begin{eqnarray*}
\sum_{k=0}^{m}c_{ik}=\mathop{\operatorname{max}}\limits_{j\in I} \{ \sum_{k=0}^{m}c_{jk}   \}.
\end{eqnarray*}
By considering the action of $(a-b)(2k+1)$ on $u$, if for some $i\in J$ there exists $0\leq k\leq n$ such that $a_{ik}\neq 0$, we can get
$$
2\psi(c(1))x_i a_{ik} d(-1)^{c_{i0}}\cdots d(-2m-1)^{c_{im}} (a+b)(0)^{a_{i0}}\cdots (a+b)(-2k)^{a_{ik}-1} \cdots $$$$
(a+b)(-2n)^{a_{in}}(a-b)(-1)^{b_{i0}}\cdots (a-b)(-2l-1)^{b_{il}}\bar{\omega}=0,
$$
a contradiction. So we have  for $i \in J$,
\begin{eqnarray}\label{equation 4.1}
\sum_{k=0}^{n}a_{ik}=0.
\end{eqnarray}
Similarly for $i\in J$,
\begin{eqnarray}\label{eq4.1}
\sum_{k=0}^{l}b_{ik}=0.
\end{eqnarray}
Let $J_{1}\subset J$ be such that for $i\in J_{1}$, $c_{im}\neq 0$. Let $J_{2}\subset J_{1}$ be such that
\begin{eqnarray*}
c_{im}= \mathop{\operatorname{max}}\limits_{j\in J_{1}} \{ c_{jm}   \}.
\end{eqnarray*}
We may assume that $1\in J_{2}$ and $x_1=2\psi(c(1))$.  By considering the action of $(a+b)(2k+2)$ and $(a-b)(2k+1)$, for $0\leq k\leq m$,
and noticing that
\begin{eqnarray}\label{equation 4.2}
&&(a+b)(2k+2) d(-1)^{c_{10}}\cdots d(-2m-1)^{c_{1m}} \bar{\omega} \nonumber \\
& = & -c_{1m} d(-1)^{c_{10}}\cdots d(-2m-1)^{c_{1m}-1}(a-b)(-2m+2k+1) \bar{\omega}+\cdots.
\end{eqnarray}
\begin{eqnarray}\label{equation 4.3}
&& (a-b)(2k+1)d(-1)^{c_{10}}\cdots d(-2m-1)^{c_{1m}} \bar{\omega} \nonumber \\
& = & -c_{1m} d(-1)^{c_{10}}\cdots d(-2m-1)^{c_{1m}-1} (a+b)(-2m+2k) \bar{\omega}+\cdots.
\end{eqnarray}
We can deduce that there exists $i\in I$ such that
\begin{eqnarray}\label{equation 4.4}
c_{i0}=c_{10}, c_{i1}=c_{11}, \cdots, c_{im-1}=c_{1m-1},  c_{im}=c_{1m}-1, \ \ \sum\limits_{k=0}^na_{ik}+\sum\limits_{k=0}^lb_{ik}\neq 0.
\end{eqnarray}
Let $I_{1}\subset I$ be such that for $i\in I_{1}$,
\begin{eqnarray*}
\{ c_{ij}\; |  \; 0\leq j \leq m  \} \;  \text {satisfies (\ref {equation 4.4})}.
\end{eqnarray*}
Let $I_{2}\subset I_{1}$ be such that for $i\in I_{2}$,
\begin{eqnarray*}
n_{1}=\sum_{k=0}^{n}a_{ik}=\mathop{\operatorname{max}}\limits_{i\in I_{1}} \{ \sum_{k=0}^{n}a_{ik}   \}.
\end{eqnarray*}
Let $I_{2}'\subset I_{1}$ be such that for $i\in I_{2}'$,
\begin{eqnarray*}
l_{1}=\sum_{k=0}^{l}b_{ik}=\mathop{\operatorname{max}}\limits_{i\in I_{1}} \{ \sum_{k=0}^{l}b_{ik}   \}.
\end{eqnarray*}
By (\ref {equation 4.2}), we have $l_{1}> 0$, and by (\ref {equation 4.3}), we have
$n_{1}> 0$. Then
\begin{eqnarray*}
&u&= 2 \psi( c(1) )d(-1)^{c_{10}}\cdots d(-2m-1)^{c_{1m}} \bar{\omega} \\
&&+ \sum_{i\in I_{2}} x_{i}d(-1)^{c_{10}}\cdots d(-2m-1)^{c_{1m}-1}  (a+b)(0)^{a_{i0}}\cdots (a+b)(-2n)^{a_{in}}\cdot\\
               &&   (a-b)(-1)^{b_{i0}}\cdots (a-b)(-2l-1)^{b_{il}}\bar{\omega} \\
&&+ \sum_{i\in I_{2}'} x_{i}d(-1)^{c_{10}}\cdots d(-2m-1)^{c_{1m}-1} (a+b)(0)^{a_{i0}}\cdots (a+b)(-2n)^{a_{in}}\cdot\\
               &&    (a-b)(-1)^{b_{i0}}\cdots (a-b)(-2l-1)^{b_{il}}\bar{\omega} \\
&&+ \sum_{i\in I_{1}  \setminus (I_{2}' \cup I_{2}) } x_{i}d(-1)^{c_{10}}\cdots d(-2m-1)^{c_{1m}-1} (a+b)(0)^{a_{i0}}\cdots (a+b)(-2n)^{a_{in}}\cdot\\
               &&    (a-b)(-1)^{b_{i0}}\cdots (a-b)(-2l-1)^{b_{il}}\bar{\omega} + \sum_{i\in I \setminus I_{1}} x_{i}u_i.
\end{eqnarray*}
If $n_{1}> 2$, let $j\in I_{2}$, $0\leq k \leq n$ be such that $a_{jk} \neq 0$.
Considering the action of $((a-b)(2k+1)-\psi((a-b)(2k+1)))$ on $u$, then the term
\begin{eqnarray*}
&& 2 x_{j} a_{jk} \psi( c(1) ) d(-1)^{c_{10}}\cdots d(-2m-1)^{c_{1m}-1}
 (a+b)(0)^{a_{j0}}\cdots (a+b)(-2k)^{a_{jk}-1}\\
 && \cdots (a+b)(-2n)^{a_{jn}} (a-b)(-1)^{b_{j0}}\cdots (a-b)(-2l-1)^{b_{jl}}\bar{\omega}
\end{eqnarray*}
only appears in
\begin{eqnarray*}
&& (a-b)(2k+1)x_{j} d(-1)^{c_{10}}\cdots d(-2m-1)^{c_{1m}-1}\cdot \\
&& (a+b)(0)^{a_{j0}}\cdots (a+b)(-2k)^{a_{jk}}  \cdots (a+b)(-2n)^{a_{jn}}
 (a-b)(-1)^{b_{j0}}\cdots (a-b)(-2l-1)^{b_{jl}}\bar{\omega}
\end{eqnarray*}
with coefficent $2 x_{j} a_{jk} \psi( c(1) )$. This means that $x_{j}=0$, a contradiction.
So $n_{1}\leq 2$. Similarly, we have $l_{1}\leq 2$.

If for $j\in I_{2}$, there exists  $0\leq k \leq l$ such that
$b_{jk}\neq 0$, considering the action of $(a+b)(2k+2)$ on $u$, we can  deduce that
$x_{j}=0$, a contradiction. So we have
\begin{eqnarray*}
\sum_{k=0}^{l} b_{ik}=0, \; \text {for}\;   i \in I_{2}.
\end{eqnarray*}
Similarly, $\sum\limits_{k=0}^{n} a_{ik}=0$, for  $i \in I_{2}'$. So we have for $i\in I_1$,
\begin{eqnarray*}
0 \leq \sum_{k=0}^{n} a_{ik} + \sum_{k=0}^{l} b_{ik} \leq 2.
\end{eqnarray*}
Suppose that there exists $j\in I_{1}$  such that
\begin{eqnarray*}
u_{j}= d(-1)^{c_{10}}\cdots d(-2m-1)^{c_{1m}-1} (a-b)(-2k-1)\bar{\omega},
\end{eqnarray*}
for some $0 \leq k \leq l$.
If $k\neq m$, consider the action of $(a+b)(2k+2)$ on $u$.
Since
\begin{eqnarray*}
&& (a+b)(2k+2)d(-1)^{c_{10}}\cdots d(-2m-1)^{c_{1m}}\bar{\omega}\\
&=& -c_{1m}d(-1)^{c_{10}}\cdots d(-2m-1)^{c_{1m}-1}(a-b)(2k-2m+1)\bar{\omega}+ \cdots,
\end{eqnarray*}
it follows that the term
\begin{eqnarray*}
d(-1)^{c_{10}}\cdots d(-2m-1)^{c_{1m}-1}\bar{\omega}
\end{eqnarray*}
 appears only  in $x_{j}(a+b)(2k+2) u_{j}$ with coefficient $-2 \psi( c(1) )x_{j}\neq 0$, a contradiction.
So  $k=m$  and
\begin{eqnarray*}
x_{j}=-c_{1m}\psi( (a-b)(1) ).
\end{eqnarray*}
Suppose there exists $j\in I_{1}$  such that
\begin{eqnarray*}
\sum\limits_{k=0}^{n} a_{jk}=1 \; \text{and} \;  \sum\limits_{k=0}^{l} b_{jk}=1.
\end{eqnarray*}
We may
assume that
\begin{eqnarray*}
u_{j}= d(-1)^{c_{10}}\cdots d(-2m-1)^{c_{1m}-1} (a+b)(-2r^{(j)}) (a-b)(-2s^{(j)}-1)\bar{\omega}.
\end{eqnarray*}
We consider the action of $(a+b)(2s^{(j)}+2)$ on $u$, then the term
\begin{eqnarray*}
 d(-1)^{c_{10}}\cdots d(-2m-1)^{c_{1m}-1} (a+b)(-2r^{(j)}) \bar{\omega}
\end{eqnarray*}
appears only in
\begin{eqnarray*}
x_{j}(a+b)(2s^{(j)}+2)u_{j}
\end{eqnarray*}
with coefficient $-2 x_{j} \psi( c(1) )\neq 0$, a contradiction.
So there exists no $j\in I_{1}$  such that
$\sum\limits_{k=0}^{n} a_{jk}=1$,   $\sum\limits_{k=0}^{l} b_{jk}=1$.

Let $j\in I_{1}$ be such that
\begin{eqnarray*}
\sum\limits_{k=0}^{n} a_{jk}=1 \; \text{and} \;  \sum\limits_{k=0}^{l} b_{jk}=0.
\end{eqnarray*}
We may
assume that
\begin{eqnarray*}
u_{j}= d(-1)^{c_{10}}\cdots d(-2m-1)^{c_{1m}-1} (a+b)(-2m^{(j)}) \bar{\omega}.
\end{eqnarray*}
 Consider the action of $(a-b)(2m^{(j)}+1)$ on $u$, then the term
\begin{eqnarray*}
 d(-1)^{c_{10}}\cdots d(-2m-1)^{c_{1m}-1} \bar{\omega}
\end{eqnarray*}
appears only  in
\begin{eqnarray*}
x_{j}(a-b)(2m^{(j)}+1)u_{j}
\end{eqnarray*}
with coefficient $2 \psi( c(1) )x_{j} \neq 0$, a contradiction.
So for $j\in I_{1}$,
if $\sum\limits_{k=0}^{n} a_{jk}\neq 0$,  then $\sum\limits_{k=0}^{n} a_{jk}= 2$  and
$\sum\limits_{k=0}^{l} b_{jk}=0$.

Let $I_4\subseteq I_{1}$ be such that for $j\in I_4$, $\sum\limits_{k=0}^{n} a_{jk}= 2$  and
$\sum\limits_{k=0}^{l} b_{jk}=0$. By (\ref{equation 4.3}), $I_4\neq \emptyset.$
 We may
assume that for $j\in I_4$,
\begin{eqnarray*}
u_{j}= d(-1)^{c_{10}}\cdots d(-2m-1)^{c_{1m}-1} (a+b)(-2m_{1}^{(j)}) (a+b)(-2m_{2}^{(j)})\bar{\omega},
\end{eqnarray*}
and  $0\leq m_{1}^{(j)}\leq m_{2}^{(j)}$.
If $m_{1}^{(j)}+ m_{2}^{(j)} \neq m$, we consider the action of $(a-b)(2m_{2}^{(j)}+1 )-\psi((a-b)(2m_2^{(j)})$ on $u$, then
the term
\begin{eqnarray*}
d(-1)^{c_{10}}\cdots d(-2m-1)^{c_{1m}-1} (a+b)(-2m_{1}^{(j)}) \bar{\omega}
\end{eqnarray*}
appears only in $x_{j}(a-b)(2m_{2}^{(j)}+1 )u_{j}$ with  coefficient
\begin{eqnarray*}
&& 2x_{j} \psi( c(1) )\neq 0, \; \text {if}\;  m_{1}^{(j)} \neq m_{2}^{(j)}, \text{or}\\
&& 4x_{j} \psi( c(1) )\neq 0, \; \text {if}\;  m_{1}^{(j)} = m_{2}^{(j)},
\end{eqnarray*}
a contradiction.
So $m_{1}^{(j)}+m_{2}^{(j)}=m$, that is,
 \begin{eqnarray}\label{equation 4.5}
u_{j}= d(-1)^{c_{10}}\cdots d(-2m-1)^{c_{1m}-1} (a+b)(-2m_{1}^{(j)}) (a+b)(-2m+2m_{1}^{(j)})\bar{\omega}.
\end{eqnarray}
 By (\ref{equation 4.3}),  we have for $j\in I_4$,
 $$x_j=c_{1m}, \ {\rm  if}\  m_{1}^{(j)}\neq m/2, $$
 or
  $$x_j=\frac{1}{2}c_{1m}, \ {\rm if} \  m_{1}^{(j)}=m/2.$$
Let  $I_5 \subseteq I_{1}$ be such that for $j\in I_{5}$,
$\sum\limits_{k=0}^{n} a_{jk}=0$ and $\sum\limits_{k=0}^{l} b_{jk}=2$. By (\ref{equation 4.2}), $I_5\neq \emptyset$, if $m\geq 1$. We may
assume that
\begin{eqnarray*}
u_{j}= d(-1)^{c_{10}}\cdots d(-2m-1)^{c_{1m}-1} (a-b)(-2r_{1}^{(j)}-1) (a-b)(-2r_{2}^{(j)}-1)\bar{\omega}
\end{eqnarray*}
such that $r_{2}^{(j)}\geq r_{1}^{(j)}\geq 0$.
If  $r_{1}^{(j)}+ r_{2}^{(j)} \neq m-1$, we consider the action of $(a+b)(2r_{2}^{(j)}+2 )$ on $u$, then
the term
\begin{eqnarray*}
d(-1)^{c_{10}}\cdots d(-2m-1)^{c_{1m}-1} (a-b)(-2r_{1}^{(j)}-1) \bar{\omega}
\end{eqnarray*}
 appears only in $x_{j}(a+b)(2r_{2}^{(j)}+2 )u_{j}$ with coefficient
\begin{eqnarray*}
&& -2x_{j} \psi( c(1) )\neq 0, \; \text {if}\;  r_{1}^{(j)} \neq r_{2}^{(j)}, \text{or}\\
&& -4x_{j} \psi( c(1) )\neq 0, \; \text {if}\;  r_{1}^{(j)} = r_{2}^{(j)},
\end{eqnarray*}
a contradiction.
Then together with (\ref{equation 4.2}), we have for $j\in I_5$,
\begin{eqnarray}\label{equation 4.6}
u_{j}= d(-1)^{c_{10}}\cdots d(-2m-1)^{c_{1m}-1} (a-b)(-2r_{1}^{(j)}-1) (a-b)(-2m+2r_{1}^{(j)}+1)\bar{\omega},
\end{eqnarray}
for some $0 \leq r_{1}^{(j)} \leq \frac{m-1}{2} $ with coefficient $-c_{1m}$ if $r_{1}^{(j)}\neq \frac{m-1}{2}$,  or $-\frac{1}{2}c_{1m}$ if $r_{1}^{(j)}= \frac{m-1}{2}$.

By the above proof, we have
\begin{eqnarray*}
u &=& 2\psi( c(1)) d(-1)^{c_{10}}\cdots d(-2m-1)^{c_{1m}} \bar{\omega}\\
&& -c_{1m}\psi( (a-b)(1)) d(-1)^{c_{10}}\cdots d(-2m-1)^{c_{1m}-1} (a-b)(-2m-1)\bar{\omega}\\
&& + \sum_{i\in I_{4}}x_{i} d(-1)^{c_{10}}\cdots d(-2m-1)^{c_{1m}-1} (a+b)(-2m_{1}^{(i)}) (a+b)(-2m+2m_{1}^{(i)})\bar{\omega} \\
&& + \sum_{i\in I_{5}}x_{i} d(-1)^{c_{10}}\cdots d(-2m-1)^{c_{1m}-1} (a-b)(-2r_{1}^{(i)}-1) (a-b)(-2m+2r_{1}^{(i)}+1)\bar{\omega} \\
&& + \sum_{i\in I_1\setminus (I_{4}\cup I_5)}x_{i} u_{i}+ \sum_{i\in I\setminus I_1}x_iu_i,
\end{eqnarray*}
where for $j\in I_4$, $0\leq m_{1}^{(j)}\leq m/2$, and $x_j=c_{1m}$   or $\frac{1}{2}c_{1m}$, depending on $m_{1}^{(j)}\neq m/2$ or $m_{1}^{(j)}=m/2$, and for $j\in I_5$, $0 \leq r_{1}^{(j)} \leq \frac{m-1}{2} $,  $x_j=-c_{1m}$ or $-\frac{1}{2}c_{1m}$, depending on $r_{1}^{(j)}\neq \frac{m-1}{2}$ or
$r_{1}^{(j)}= \frac{m-1}{2}$.

We now consider the action of $d(2m+1)$ on $u$, then we have
\begin{eqnarray}\label{equation 4.6}
&&( d(2m+1)-\psi( d(2m+1)) ) u   \nonumber\\
&=& -c_{1m}\psi( (a-b)(1)) d(-1)^{c_{10}}\cdots d(-2m-1)^{c_{1m}-1} (a+b)(0)\bar{\omega} \nonumber \\
&& + \sum_{i\in I_{4}}x_{i} d(-1)^{c_{10}}\cdots d(-2m-1)^{c_{1m}-1}( 2\psi(c(1))+
(a+b)(-2m+2m_{1}^{(i)})\cdot \nonumber\\
&& (a-b)(2m-2m_{1}^{(i)}+1) + (a+b)(-2m_{1}^{(i)}) (a-b)(2m_{1}^{(i)}+1) )\bar{\omega} \nonumber\\
&& + \sum_{i\in I_{5}}x_{i} d(-1)^{c_{10}}\cdots d(-2m-1)^{c_{1m}-1}( -2\psi(c(1))+
(a-b)(-2m+2r_{1}^{(i)}+1)\cdot \nonumber\\
&& (a+b)(2m-2r_{1}^{(i)}) + (a-b)(-2r_{1}^{(i)}-1) (a+b)(2r_{1}^{(i)}+2) )\bar{\omega} \nonumber\\
&& + \sum_{i\in I\setminus I_{1} }( d(2m+1)-\psi( d(2m+1)) ) x_{i} u_{i}=0.\nonumber
\end{eqnarray}
So we have
$$
(\sum\limits_{i\in I_4}x_i-\sum\limits_{j\in I_5}x_j)\psi(c(1))=0.
$$
Since for $i\in I_4, j\in I_5$, $x_i=c_{1m}$ or $\frac{1}{2}c_{1m}$, $x_j=-c_{1m}$ or $-\frac{1}{2}c_{1m}$, and $\psi(c(1))\neq 0$, it follows that
  $c_{1m}=0$, a
 contradiction.
This proves that for all $i\in I$,
\begin{eqnarray*}
c_{i0}=\cdots= c_{im}=0.
\end{eqnarray*}
Then
$$
u=\sum_{i\in I}x_{i} (a+b)(0)^{a_{i0}} \cdots (a+b)(-2n)^{a_{in}}(a-b)(-1)^{b_{i0}} \cdots (a-b)(-2l-1)^{b_{il}} \bar{\omega}
$$
is a Whittaker vector of type $\psi$ of the Weyl algebra, which is linearly spanned by
\begin{eqnarray*}
\{(a+b)(2r), (a-b)(2s+1), c(1)\; |\; r,s \in \mathbb{Z}       \},
\end{eqnarray*}
with relations:
$$
[(a+b)(2r),(a-b)(2s+1)]=-2\delta_{r+s,0}c(1).
$$
Then we immediately have $u=\bar{\omega}$.
\end{proof}

\section{Whittaker vectors for $\psi(c(1))=0$}
\setcounter{equation}{0}

In this section, we assume that $\psi(c(1))= 0$, that is, $\psi$
is singular. We shall continue to investigate the form of Whittaker
vectors and the connections between Whittaker modules and Verma
modules for the twisted affine Nappi-Witten Lie algebra
$\widehat{H}_{4}[\tau]$.

We still follow the notations in Theorem \ref{thm3.7} and Corollary \ref{coro3.8}. In particular,
$\bar{\omega}$ (resp. $\omega$) denotes the cyclic Whittaker
vector $\overline{1\otimes 1}$ (resp. $1\otimes 1$) for $L_{\psi, \xi}$
(resp. $M_{\psi}$).

Note that $\psi((a+b)(n))=\psi((a-b)(m))=\psi(c(l))=0$ for $n\in
2\mathbb{Z}_{+}$, $m\in 2\mathbb{Z}_{+}+1$, $l\in 2\mathbb{N}+1$.
Denote $\psi((a-b)(1))$  by $\sigma_{1}$, we have the following
lemma by straightforward computations.

\begin{lem}
\begin{itemize}
\item[{\rm(i)}] If $\sigma_{1}=0$, then $(a+b)(0)\bar{\omega}$ is a
Whittaker vector of $L_{\psi, \xi}$;
\item[{\rm(ii)}]  If $\sigma_{1}\neq
0$, then $(\xi(a+b)(0)-\sigma_{1}c(-1))\bar{\omega}$ is a Whittaker
vector of $L_{\psi, \xi}$.
\end{itemize}
\end{lem}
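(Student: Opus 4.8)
The plan is to verify directly that the proposed vectors are annihilated (up to the scalar $\psi$) by every generator of $\widehat{H}_{4}[\tau]^{(+)}$, using the commutation relations of the algebra together with the defining property $x\bar\omega=\psi(x)\bar\omega$ of the cyclic Whittaker vector. Recall from the Remark that for any $x\in\widehat{H}_{4}[\tau]^{(+)}$ and any $u$ lying in $U(\widehat{H}_{4}[\tau]^{(-)}\oplus\C(a+b)(0))$, one has $(x-\psi(x))u\bar\omega=[x,u]\bar\omega$. Thus it suffices to compute the brackets $[x,u]$ for $u\in\{(a+b)(0),\ c(-1)\}$ and show the results act as zero (under the singular hypothesis $\psi(c(1))=0$).

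For part (i), take $u=(a+b)(0)$. The generators of $\widehat{H}_{4}[\tau]^{(+)}$ are $(a+b)(n)$ for $n\in 2\Z_{+}$, $(a-b)(m)$, $c(m)$, $d(m)$ for $m\in 2\N+1$. First I would list the nonzero brackets with $(a+b)(0)$: since $(a+b)(0)$ has degree $0$ and involves only $a,b$, the only surviving brackets come from pairing $a+b$ against $d$ and from the central term. A short calculation using $[a,b]=c$, $[d,a]=a$, $[d,b]=-b$ gives that $(a+b)(n)$, $(a-b)(m)$, $c(m)$ all commute with $(a+b)(0)$, while the bracket with $d(m)$ produces a multiple of $(a-b)(m)\bar\omega$, which equals $\psi((a-b)(m))\bar\omega=0$ because $m\in 2\N+1$ lies in the positive part with vanishing $\psi$-value (indeed $\psi((a-b)(m))=0$ for $m\in 2\Z_{+}+1$). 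Hence every $(x-\psi(x))$ kills $(a+b)(0)\bar\omega$, establishing that it is a Whittaker vector of type $\psi$ when $\sigma_1=0$; the hypothesis $\sigma_1=\psi((a-b)(1))=0$ is exactly what is needed to handle the borderline case $m=1$.

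For part (ii), take $u=\xi(a+b)(0)-\sigma_1 c(-1)$. The new ingredient is the bracket of the positive generators with $c(-1)$. Since $c$ is central in $H_4$, the only nonzero bracket is $[d(m),c(-1)]$, which yields a multiple of the central element $\k$ acting as $\xi$, and $[c(m),c(-1)]$ which vanishes as $c$ pairs trivially with itself; I would also record $[(a-b)(m),c(-1)]=0$ and $[(a+b)(n),c(-1)]=0$. The only delicate generator is $(a-b)(1)$: acting on $\xi(a+b)(0)\bar\omega$ it produces a $c(1)$-type term (from $[(a-b)(1),(a+b)(0)]$, a central contribution proportional to $\k=\xi$), while acting on $-\sigma_1 c(-1)\bar\omega$ it produces a compensating central term; the coefficients are arranged so that the two contributions cancel precisely because of the factor $\xi$ in front of $(a+b)(0)$ and the factor $\sigma_1=\psi((a-b)(1))$ in front of $c(-1)$. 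I expect the main obstacle to be bookkeeping the central terms correctly — tracking the exact signs and the factors of $\xi$ coming from $\k\bar\omega=\xi\bar\omega$ when verifying the $(a-b)(1)$ and $d(1)$ actions, since these are the only generators whose brackets with both $(a+b)(0)$ and $c(-1)$ are simultaneously nonzero and must be made to cancel.
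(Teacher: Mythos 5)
Your overall strategy --- reducing everything to the computation of $[x,u]\bar{\omega}$ for each generator $x$ of $\widehat{H}_{4}[\tau]^{(+)}$ via the Remark --- is exactly the intended one (the paper itself dismisses the lemma as following ``by straightforward computations''). However, two of your bracket computations are wrong, and one of them sits precisely where the content of the lemma lies. First, $[(a-b)(m),(a+b)(0)]=[a-b,a+b](m)=2c(m)$, not $0$: since $[a,b]=c$, the element $a-b$ does not commute with $a+b$. This term still annihilates $\bar{\omega}$, but only because $\psi(c(m))=0$ for all odd $m\geq 1$, which for $m=1$ is the standing singularity hypothesis $\psi(c(1))=0$ of this section --- a hypothesis your part (i) never invokes.

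Second, and more seriously, in part (ii) you locate the cancellation in the action of $(a-b)(1)$, asserting that $[(a-b)(1),(a+b)(0)]$ contributes a central term proportional to $\k$ which is compensated by a central term from $[(a-b)(1),c(-1)]$. Neither claim is correct: $[(a-b)(1),(a+b)(0)]=2c(1)$ carries no $\k$-term (the central contribution would require $\delta_{1+0,0}=0$), and $[(a-b)(1),c(-1)]=0$ because $c$ is central in $H_{4}$ and $(a-b,c)=0$. So for $(a-b)(1)$ there is nothing to cancel; its contribution $2\xi\,c(1)\bar{\omega}=2\xi\,\psi(c(1))\bar{\omega}$ vanishes by singularity alone. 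The genuine cancellation --- the whole reason for the coefficients $\xi$ and $\sigma_{1}$ in $z$ --- occurs for $x=d(1)$:
$$[d(1),\,\xi(a+b)(0)-\sigma_{1}c(-1)]=\xi(a-b)(1)-\sigma_{1}(d,c)\k=\xi(a-b)(1)-\sigma_{1}\k,$$
which acts on $\bar{\omega}$ as $\xi\sigma_{1}-\sigma_{1}\xi=0$, while for odd $m\geq 3$ one simply gets $\xi(a-b)(m)\bar{\omega}=\xi\psi((a-b)(m))\bar{\omega}=0$. Your closing sentence does list $d(1)$ among the generators to be checked, but the cancellation mechanism you actually describe is attached to the wrong generator and to incorrectly computed brackets, so as written the verification does not go through and must be redone with the correct structure constants.
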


The following theorem gives a full characterization of Whittaker vectors of $L_{\psi,\xi}$ for $\psi(c(1))=0$, $\xi \neq 0$.
\begin{theorem}\label{prop5.3}
Let $\psi(c(1))=0$, $\xi \neq 0$.    Set
\begin{eqnarray*}
z=\left\{
\begin{array}{ll}
(a+b)(0), \quad \quad\quad\quad\quad\quad\;  if\; \sigma_{1}=0, \\
\xi(a+b)(0)-\sigma_{1}c(-1),\quad   if \;  \sigma_{1}\neq0,
\end{array}
\right.
\end{eqnarray*}
then $v$ is a Whittaker
vector of $L_{\psi, \xi}$ if and only if $v=u\bar{\omega}$ for some $u\in \mathbb{C}[z]$, where $\mathbb{C}[z]$ is the
polynomial algebra generated by $z$.

\end{theorem}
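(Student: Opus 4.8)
The plan is to characterize the Whittaker vectors of $L_{\psi,\xi}$ in the singular case $\psi(c(1))=0$, $\xi\neq 0$, by first establishing that $z\bar\omega$ is a Whittaker vector (this is precisely the preceding lemma), and then showing that $z$ acts injectively and ``freely enough'' that the full space of Whittaker vectors is exactly $\mathbb{C}[z]\bar\omega$. The sufficiency direction is immediate: since $z\bar\omega$ is a Whittaker vector and $z$ commutes with every element of $\widehat{H}_4[\tau]^{(+)}$ modulo terms that annihilate a Whittaker vector, one checks directly that $z^k\bar\omega$ is a Whittaker vector for all $k\in\mathbb N$, whence $u\bar\omega$ is Whittaker for any $u\in\mathbb{C}[z]$. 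The real content is the necessity direction.

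For necessity I would exploit the fact that, when $\psi(c(1))=0$, the element $c(1)$ no longer ``detects'' the $d$-degree (contrast with the nonsingular argument in Theorem \ref{thm3.7}, where $c(1)$ acting with eigenvalue $\xi$ forced the $d(-1)^{t}$ exponents to vanish). Here the essential simplification is that, since $\psi$ is singular, $\psi(c(m))=0$ and $\psi((a-b)(m))=0$ for the relevant odd $m$, so $\psi$ is concentrated on $(a-b)(1)$ with value $\sigma_1$. First I would write an arbitrary Whittaker vector $v$ in the PBW basis of Proposition \ref{prop2.1}(ii) and apply the raising operators $(a+b)(n)$, $(a-b)(m)$, $c(m)$, $d(m)$ using the commutation formulas (2.5)--(2.8) together with the brackets $[c(m),d(-n)]=\dots{\bf k}$ and $[(a-b)(m),(a-b)(-n)]$ and $[(a+b)(n),(a+b)(-m)]$. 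The strategy is to impose $(x-\psi(x))v=0$ for each raising generator $x$ and read off, degree by degree, which monomials can survive. Because $\xi\neq 0$, the Heisenberg-type relation $[c(2m+1),d(-2m-1)]=(2m+1){\bf k}$ again acts invertibly, so applying $c(2m+1)$ kills any dependence on $d(-2m-1)$ for $m\geq 1$; this removes all the higher $d$-factors and all the higher $c$-factors, pushing $v$ into the subalgebra generated by $(a+b)(-2\tilde\mu)$, $(a-b)(-\nu)$, $c(-1)$, and $d(-1)$ acting on $\bar\omega$, with the Weyl-algebra relation $[(a+b)(2r),(a-b)(2s+1)]=-2\delta_{r+s,0}c(1)$ now degenerate since $\psi(c(1))=0$.

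At that point the problem reduces to analyzing Whittaker vectors inside a much smaller algebra, and the two cases $\sigma_1=0$ and $\sigma_1\neq 0$ must be handled separately, matching the two definitions of $z$. In the case $\sigma_1=0$ one shows that the only surviving generator that produces a nonzero eigenvalue is trivial, so the Whittaker condition forces all factors except $(a+b)(0)$ to drop out, yielding $v\in\mathbb{C}[(a+b)(0)]\bar\omega$. In the case $\sigma_1\neq 0$ the operator $(a-b)(1)$ has eigenvalue $\sigma_1$ and its action on $(a+b)(0)$ and $c(-1)$ must be balanced; one checks that the specific combination $z=\xi(a+b)(0)-\sigma_1 c(-1)$ is exactly the kernel-generator that is annihilated by all raising operators modulo $\psi$, so powers of $z$ exhaust the Whittaker vectors. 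I expect the main obstacle to be the bookkeeping in the intermediate reduction: proving rigorously that applying the raising operators forces the disappearance of \emph{all} monomials involving $d(-2m-1)$ for $m\geq 1$ and $c(-2m-1)$ for $m\geq1$, and that no ``cross terms'' conspire to cancel. This is the analogue of the delicate leading-term argument carried out in Theorem \ref{thm 4.3}, and the careful choice of a maximal index set (as in the sets $J$, $I_1$, $I_2$ there) together with comparison of coefficients of a uniquely-produced leading monomial will be the technical heart of the proof.
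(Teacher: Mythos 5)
Your overall strategy is the one the paper uses: exploit $\xi\neq 0$ to make the Heisenberg-type pairings act invertibly, reduce an arbitrary Whittaker vector to a polynomial in $(a+b)(0)$ and $c(-1)$ applied to $\bar\omega$, and then split into the cases $\sigma_1=0$ and $\sigma_1\neq 0$ via the action of $d(1)$. (The paper packages the first reduction as a decomposition of $L_{\psi,\xi}$ into irreducible highest-weight modules over the Heisenberg subalgebra $\bigoplus_{m\in 2\mathbb{N}+1}(\mathbb{C}c(m)\oplus\mathbb{C}d(-m))\oplus\mathbb{C}{\bf k}$, rather than as a term-by-term leading-coefficient argument, but the content is the same.) However, your sketch has a genuine gap at the reduction step: you only kill $d(-2m-1)$ for $m\geq 1$ and explicitly retain $d(-1)$ in the ``smaller algebra.'' If $d(-1)$-factors survived, the final answer would not be $\mathbb{C}[z]\bar\omega$. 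The point you are missing is that in the singular case $\psi(c(1))=0$, the operator $c(1)$ itself is an honest annihilation operator with $[c(1),d(-1)]={\bf k}$ acting as $\xi\neq 0$ and with no competing $\psi$-eigenvalue term, so the same mechanism removes \emph{all} $d$-factors, including $d(-1)$. This is exactly why the paper's Heisenberg subalgebra here includes the index $m=1$, in contrast to the nonsingular proof of Theorem \ref{thm3.7}, where $d(-1)^t$ had to be handled by a separate application of $c(1)-\psi(c(1))$. You appear to have copied the index range from the nonsingular argument without noticing this simplification.

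Two smaller issues. First, you assert that applying $c(2m+1)$ removes ``all the higher $c$-factors''; it does not, since the $c$'s commute with each other — it is the operators $d(2l+1)$ (paired with $c(-2l-1)$ through ${\bf k}$) that eliminate $c(-2l-1)$ for $l\geq 1$, as in the paper's Case III, and this is also where the dichotomy appears: for $l=0$ and $\sigma_1\neq 0$ the $d(1)$-action produces a second term $a_{i0}\sigma_1(a+b)(0)^{a_{i0}-1}c(-1)^{c_{i0}}$ that can cancel the $\xi c_{i0}$-term, which is precisely why $c(-1)$ survives in that case. Second, your endgame for $\sigma_1\neq 0$ (``powers of $z$ exhaust the Whittaker vectors'') needs an actual argument that the solution space of the resulting linear condition is no larger than $\mathbb{C}[z]\bar\omega$; the paper does this by taking the maximal exponent $c_{10}$ of $c(-1)$, observing the corresponding $a_{10}=0$, subtracting the multiple of $(\sigma_1 c(-1)-\xi(a+b)(0))^{c_{10}}\bar\omega$ with matching leading term, and inducting on $c_{10}$. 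With the $d(-1)$ elimination supplied and this induction made explicit, your outline becomes the paper's proof.
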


\begin{proof}
By Lemma 5.1 and  induction on the degree of $u$ as a polynomial, it is easy to check
 that $\mathbb{C}[z]\bar{\omega}$ are Whittaker
vectors of $L_{\psi, \xi}$.
Conversely, let $v$ be a  Whittaker
vector of $L_{\psi, \xi}$. Note that for $m,n\in 2\mathbb{N}+1$,
\begin{eqnarray*}
[c(m),d(-n)]=m\delta_{m,n}{\bf k},
\end{eqnarray*}
then
 \begin{eqnarray*}
 {\frak s}=\mathop{\bigoplus}\limits_{m\in 2{\mathbb N}+1}(\mathbb{C} c(m)\oplus\mathbb{C}
d(-m))\oplus \mathbb{C}{\bf k}
\end{eqnarray*}
is a Heisenberg algebra and $L_{\psi, \xi}$ can be viewed as an ${\frak s}$-module
such that ${\bf k}$ acts as $\xi \neq 0$.
Since every highest weight ${\frak s}$-module
generated by one element with $\k$ acting as a non-zero scalar is
irreducible, it follows  that $L_{\psi, \xi}$ can be decomposed into a direct
sum of irreducible highest weight modules of ${\frak s}$ with
the highest weight vectors
 \begin{eqnarray*}
\{(a+b)(-\tilde{\mu})(a-b)(-\nu)c(-\eta)\bar{\omega}\; | \;  (\tilde{\mu},\nu,\eta)\in \widetilde{\mathcal
{P}}_{even}\times \mathcal {P}_{odd}\times \mathcal {P}_{odd}\}.
\end{eqnarray*}
So if $\bar{\omega}\neq v \in L_{\psi, \xi}$ is a Whittaker
vector, then $v$ is a linear combination of elements of the form
\begin{eqnarray*}
(a+b)(-\tilde{\mu})(a-b)(-\nu)c(-\eta)\bar{\omega},\;   (\tilde{\mu},\nu,\eta)\in \widetilde{\mathcal
{P}}_{even}\times \mathcal {P}_{odd}\times \mathcal {P}_{odd}.
\end{eqnarray*}
We may assume that
\begin{eqnarray*}
&v=&\sum_{i\in I} x_{i} (a+b)(0)^{a_{i0}}(a+b)(-2)^{a_{i1}}\cdots (a+b)(-2n)^{a_{in}} \cdot\\
&&(a-b)(-1)^{b_{i0}}(a-b)(-3)^{b_{i1}} \cdots (a-b)(-2m-1)^{b_{im}} c(-1)^{c_{i0}}c(-3)^{c_{i1}}\cdots c(-2l-1)^{c_{il}}\bar{\omega},
\end{eqnarray*}
where $I$ is a finite index set, $n,m,l,a_{ij},b_{ij},c_{ij}\in \mathbb{N}$.

{\bf Case I}  Suppose $n> 0$ and $\{ a_{in} \; | \;  i\in I\}\neq \{0\}$.

If $m\geq n$ and there exists $i\in I$ such that $b_{im}\neq 0$, we obtain
\begin{eqnarray*}
&&((a-b)(2m+1)-\psi((a-b)(2m+1)))v\\
&=& \sum_{i\in I} x_{i} (-2)(2m+1)\xi b_{im} (a+b)(0)^{a_{i0}}(a+b)(-2)^{a_{i1}}\cdots (a+b)(-2n)^{a_{in}}\cdot\\
&&(a-b)(-1)^{b_{i0}}(a-b)(-3)^{b_{i1}}\cdots (a-b)(-2m-1)^{b_{im}-1} c(-1)^{c_{i0}}c(-3)^{c_{i1}}\cdots c(-2l-1)^{c_{il}} \bar{\omega}\\
 &\neq & 0,
\end{eqnarray*}
 a contradiction. Now assume $m< n$ and there exists $i\in I$ such that $b_{im}\neq 0$, we deduce
\begin{eqnarray*}
&&((a+b)(2n)-\psi((a+b)(2n)))v\\
&=& \sum_{i\in I} x_{i} 2(2n)\xi a_{in} (a+b)(0)^{a_{i0}}(a+b)(-2)^{a_{i1}}\cdots (a+b)(-2n)^{a_{in}-1}\cdot\\
&&(a-b)(-1)^{b_{i0}}(a-b)(-3)^{b_{i1}}\cdots (a-b)(-2m-1)^{b_{im}} c(-1)^{c_{i0}}c(-3)^{c_{i1}}\cdots c(-2l-1)^{c_{il}} \bar{\omega}\\
 &\neq & 0,
\end{eqnarray*}
 a contradiction again.

{\bf Case II}  Suppose $m\geq 0$ and $\{ b_{im} \; | \;  i\in I\}\neq \{0\}$ and $\{ a_{in} \; | \;  i\in I\}= \{0\}$ for any $n> 0$.

Then $v$ has the following form
\begin{eqnarray*}
v&=&\sum_{i\in I} x_{i} (a+b)(0)^{a_{i0}}
(a-b)(-1)^{b_{i0}}(a-b)(-3)^{b_{i1}} \cdots (a-b)(-2m-1)^{b_{im}} \cdot \\
&&c(-1)^{c_{i0}}c(-3)^{c_{i1}}\cdots c(-2l-1)^{c_{il}}\bar{\omega}.
\end{eqnarray*}
It is easy to check that
$((a-b)(2m+1)-\psi((a-b)(2m+1)))v\neq0$, a contradiction.

{\bf Case III}  Suppose $l\geq 1$ and $\{ c_{il} \; | \;  i\in I\}\neq \{0\}$ and $\{ a_{in} \; | \;  i\in I\}=\{ b_{im} \; | \;  i\in I\}= \{0\}$ for any $n> 0, m\geq 0$.

In this case,
\begin{eqnarray*}
v=\sum_{i\in I} x_{i} (a+b)(0)^{a_{i0}}
c(-1)^{c_{i0}}c(-3)^{c_{i1}}\cdots c(-2l-1)^{c_{il}}\bar{\omega}.
\end{eqnarray*}
We have
\begin{eqnarray*}
&&(d(2l+1)-\psi(d(2l+1)))v\\
&=& \sum_{i\in I} x_{i} (2l+1)\xi c_{il} (a+b)(0)^{a_{i0}}c(-1)^{c_{i0}}c(-3)^{c_{i1}}\cdots c(-2l-1)^{c_{il}-1} \bar{\omega}\\
 &\neq & 0,
\end{eqnarray*}
 a contradiction.

 If $\sigma_{1}=0$, by Case III, it also leads to a contradiction for $l\geq 0$. Then
\begin{eqnarray*}
v=\sum_{i\in I} x_{i} (a+b)(0)^{a_{i0}}
\bar{\omega},
\end{eqnarray*}
 and $v\in \mathbb{C}[(a+b)(0)]\bar{\omega}$.
If $\sigma_{1}\neq 0$, then
\begin{eqnarray*}
v=\sum_{i\in I} x_{i} (a+b)(0)^{a_{i0}}
c(-1)^{c_{i0}}\bar{\omega}.\label{5.1}
\end{eqnarray*}
 For any $n\in 2\mathbb{Z}_{+}, m\in 2\mathbb{N}+1$, it is easy to see that
\begin{eqnarray*}
((a+b)(n)-\psi((a+b)(n)))v=((a-b)(m)-\psi((a-b)(m)))v=(c(m)-\psi(c(m)))v=0.
\end{eqnarray*}
We consider $(d(m)-\psi(d(m)))v$ for $m\in 2\mathbb{N}+1$. Then we have
\begin{eqnarray*}
&&(d(m)-\psi(d(m)))v   \\
&=&\sum_{i\in I} x_{i}( [d(m), (a+b)(0)^{a_{i0}}] c(-1)^{c_{i0}}\bar{\omega}+ (a+b)(0)^{a_{i0}}[d(m),c(-1)^{c_{i0}} ]\bar{\omega}) \nonumber \\
&=& \sum_{i\in I} x_{i} (a_{i0}(a-b)(m) (a+b)(0)^{a_{i0}-1}c(-1)^{c_{i0}} \bar{\omega}+m \delta_{m,1}\xi c_{i0}(a+b)(0)^{a_{i0}} c(-1)^{c_{i0}-1}\bar{\omega}).
\end{eqnarray*}
By the above equtation, if $m>1$, $(d(m)-\psi(d(m)))v =0$. If $m=1$,  we have
\begin{eqnarray}
&&(d(1)-\psi(d(1)))v  \nonumber \\
&= & \sum_{i\in I} x_{i} ( a_{i0}(a-b)(1) (a+b)(0)^{a_{i0}-1}c(-1)^{c_{i0}} + \xi c_{i0} (a+b)(0)^{a_{i0}} c(-1)^{c_{i0}-1})\bar{\omega}\nonumber\\
& = & 0, \label{5.2}
\end{eqnarray}
as $v$ is a Whittaker vector.

In the following we prove that $ v=\sum\limits_{i\in I} x_{i} (a+b)(0)^{a_{i0}}
c(-1)^{c_{i0}}\bar{\omega}\in \mathbb{C}[z]\bar{\omega}$, where $z=\xi(a+b)(0)-\sigma_{1}c(-1)$.
We may assume that
\begin{eqnarray*}
c_{10}=\mathop{\operatorname{max}}\limits_{i\in I}\{ c_{i0} \}.
\end{eqnarray*}
By (\ref{5.2}), $a_{10}=0$.  Then we have
\begin{eqnarray*}
v&=&\sum_{i\in I} x_{i} (a+b)(0)^{a_{i0}}
c(-1)^{c_{i0}}\bar{\omega}\\
&=&  x_{1}'c(-1)^{c_{10}}\bar{\omega}+\sum_{\substack{i\in I \\ c_{i0}<c_{10}}} x_{i} (a+b)(0)^{a_{i0}}c(-1)^{c_{i0}}\bar{\omega}\\
&=& x_{1}'\sigma_{1}^{-c_{10}}(\sigma_{1}c(-1)-\xi (a+b)(0))^{c_{10}}\bar{\omega}+\sum_{\substack{i\in I \\ g_{i0}<c_{10}}} y_{i} (a+b)(0)^{h_{i0}}c(-1)^{g_{i0}}\bar{\omega}.
\end{eqnarray*}
Denote $\sum\limits_{\substack{i\in I \\ g_{i0}<c_{10}}} y_{i} (a+b)(0)^{h_{i0}}c(-1)^{g_{i0}}\bar{\omega}$  by $v'$. Then $v'$ is also a Whittaker vector. By inductive assumption, $v' \in \mathbb{C}[z]\bar{\omega}$. Thus $v\in \mathbb{C}[z]\bar{\omega}$.
\end{proof}

\begin{rem}
Suppose  $\psi(c(1))=0$. Set
\begin{eqnarray*}
z=\left\{
\begin{array}{ll}
(a+b)(0), \quad \quad\quad\quad\quad\quad\;  if\; \sigma_{1}=0, \\
(a+b)(0){\bf k}-\sigma_{1}c(-1),\quad   if \;  \sigma_{1}\neq0.
\end{array}
\right.
\end{eqnarray*}
The same argument as in Theorem \ref{prop5.3}, by  replacing $\xi$ with ${\bf k}$ and $\bar{\omega}$ with
$\omega$ whenever necessary, proves that $v$ is a Whittaker
vector of $M_{\psi}$ if and only if $v=u\omega$ for some $u\in \mathbb{C}[z, {\bf k}]$,
where $\mathbb{C}[z, {\bf k}]$ is the
polynomial algebra generated by $z$ and ${\bf k}$.
\end{rem}

In the following we consider the case that  $\psi$ is identically zero. We first recall the definition of Verma module for the  twisted affine
Nappi-Witten Lie algebra $\widehat{H}_{4}[\tau]$ given in \cite{CJJ}. If
$\psi$ is identically zero, for  $l\in \mathbb{C}$, let
\begin{eqnarray*}
M_{l}=U(\widehat{H}_{4}[\tau])((a+b)(0)-l)\bar{\omega},
\end{eqnarray*}
then the
quotient module
$$M(\xi,l):= L_{\psi,\xi}/ M_{l}$$
is a Verma module for $\widehat{H}_{4}[\tau]$. By Theorem 2.1 of \cite{CJJ}, we
immediately obtain the following lemma.

\begin{lem}  {\rm (\cite{CJJ})} \label{lem 5.4}
For $\xi, l \in \mathbb{C}$, the Verma module $M(\xi,l)$ of
$\widehat{H}_{4}[\tau]$ is irreducible if and only if $\xi \neq 0$.
\end{lem}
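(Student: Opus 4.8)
The plan is to identify $M(\xi,l)$ with the highest weight module of \cite{CJJ} and to read the statement off from Theorem~2.1 there. By construction $M(\xi,l)=L_{0,\xi}/M_{l}$ is generated by the image $v$ of $\bar{\omega}$, on which $\widehat{H}_{4}[\tau]^{(+)}$ acts by $0$ (as $\psi=0$), while ${\bf k}$ acts by $\xi$ and $(a+b)(0)$ acts by $l$; deleting the $(a+b)(0)$--factors from the basis (2.4) yields a PBW basis of $M(\xi,l)$. Thus $M(\xi,l)$ is precisely the Verma module of $\widehat{H}_{4}[\tau]$ with highest weight datum $((a+b)(0)\mapsto l,\ {\bf k}\mapsto\xi)$, and the lemma is the specialization of Theorem~2.1 of \cite{CJJ} to these parameters. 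I indicate below how each direction is obtained and where the difficulty lies.

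For the ``only if'' direction I would exhibit an explicit singular vector when $\xi=0$. From the bracket relations, $[x,c(-1)]=0$ for every $x\in\widehat{H}_{4}[\tau]^{(+)}$ except $x=d(1)$, while $[d(1),c(-1)]={\bf k}$; since $\widehat{H}_{4}[\tau]^{(+)}$ annihilates $v$, this gives $x\,c(-1)v=\delta_{x,d(1)}\,\xi\,v$ on generators. Hence when $\xi=0$ the nonzero vector $c(-1)v$ is killed by all of $\widehat{H}_{4}[\tau]^{(+)}$; as it is moreover of strictly negative degree, the submodule it generates avoids $v$ and is therefore proper, so $M(0,l)$ is reducible for every $l$.

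For the ``if'' direction, assume $\xi\neq0$ and work with the contravariant Shapovalov form. The grade--reversing involution $\phi$ of $H_{4}$ given by $a\mapsto-b$, $b\mapsto-a$, $c\mapsto-c$, $d\mapsto-d$ preserves the bilinear form, and via $\theta(x(n))=\phi(x)(-n)$, $\theta({\bf k})=-{\bf k}$ it induces an anti-involution $\sigma=-\theta$ of $U(\widehat{H}_{4}[\tau])$ that fixes $(a+b)(0)$ and ${\bf k}$ and interchanges $\widehat{H}_{4}[\tau]^{(\pm)}$. This produces a symmetric contravariant form on $M(\xi,l)$ whose radical is the maximal proper submodule, so irreducibility is equivalent to nondegeneracy of the form. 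Since $\sigma$ reverses the energy grading (assign $x(n)$ degree $n$), the form is block diagonal along the finite--dimensional graded pieces, and nondegeneracy amounts to the nonvanishing of each graded Shapovalov determinant.

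The main obstacle is precisely this determinant computation. The four families of generators are linked by nonzero central pairings once $\xi\neq0$, namely $[(a+b)(2m),(a+b)(-2m)]=4m{\bf k}$, $[(a-b)(2m+1),(a-b)(-2m-1)]=-2(2m+1){\bf k}$, and $[c(2m+1),d(-2m-1)]=[d(2m+1),c(-2m-1)]=(2m+1){\bf k}$, which contribute nonzero powers of $\xi$ along the diagonal of each block. However, the generators also satisfy non--central cross relations such as $[d(2m+1),(a+b)(-2n)]=(a-b)(2m+1-2n)$ and $[d(2m+1),(a-b)(-2n-1)]=(a+b)(2m-2n)$, which mix the families and create off--diagonal entries; controlling these so as to conclude that each block determinant is a nonzero multiple of a power of $\xi$, uniformly in $l$, is the technical heart of the matter. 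This is exactly the computation carried out in Theorem~2.1 of \cite{CJJ}, from which the lemma follows at once.
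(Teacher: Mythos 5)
Your proposal matches the paper exactly: Lemma \ref{lem 5.4} is not proved in the paper but is obtained directly by citing Theorem~2.1 of \cite{CJJ}, which is precisely what you do after correctly identifying $M(\xi,l)=L_{0,\xi}/M_{l}$ with the Verma module of highest weight datum $({\bf k}\mapsto\xi,\ (a+b)(0)\mapsto l)$. Your additional sketch is sound --- in particular the singular vector $c(-1)v$ for $\xi=0$ is correct, since $[x,c(-1)]=\delta_{x,d(1)}{\bf k}$ on the generators of $\widehat{H}_{4}[\tau]^{(+)}$ --- but it goes beyond what the paper records.
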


\begin{theorem}\label{thm5.5}
If $\psi$ is identically zero and $\xi\neq 0$, for each $l\in \mathbb{C}$ and $i\in\Z_{\geq 1}$, define
 $$M^{i}=U(\widehat{H}_{4}[\tau])((a+b)(0)-l)^{i}\bar{\omega}.$$
Then
\begin{itemize}
\item[{\rm(i)}] $M^{i}$ is a Whittaker submodule of $L_{0,\xi}$, with a cyclic Whittaker vector
$\omega_{i}=((a+b)(0)-l)^{i}\bar{\omega}$, and $M^{i+1}$ is a maximal submodule of $M^{i}$;
\item[{\rm(ii)}] $L_{0,\xi}   \cong M^{i}$ for any $i\in \mathbb{N}$;
\item[{\rm(iii)}]  $M^{i}/ M^{i+1}\cong M(\xi,l)$ with the multiplicity space consisting of polynomials in one variable;
\item[{\rm(iv)}] $L_{0,\xi}$ has a filtration
$$L_{0,\xi}=M^{0}\supseteq M^{1}\supseteq \cdots \supseteq M^{i}\supseteq \cdots$$
with the simple sections given by the Verma module $M(\xi,l)$ with multiplicity infinity.
\end{itemize}
\end{theorem}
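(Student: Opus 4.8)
The plan is to prove all four parts at once by showing that the assignment ``multiply the cyclic vector by $((a+b)(0)-l)^{i}$'' induces an isomorphism $L_{0,\xi}\xrightarrow{\sim}M^{i}$, and then to read off the filtration from the single identity $M^{1}=M_{l}$. First I would record that, since $\psi\equiv 0$ forces $\psi(c(1))=0$ and $\sigma_{1}=\psi((a-b)(1))=0$, Theorem \ref{prop5.3} (with $z=(a+b)(0)$) identifies the space of Whittaker vectors of $L_{0,\xi}$ with $\mathbb{C}[(a+b)(0)]\bar\omega$. In particular every $\omega_{i}=((a+b)(0)-l)^{i}\bar\omega$ is a Whittaker vector of type $\psi=0$ on which ${\bf k}$ acts by $\xi$, so $M^{i}=U(\widehat{H}_{4}[\tau])\omega_{i}$ is a Whittaker submodule with cyclic Whittaker vector $\omega_{i}$; this is the first half of (i) (for $i=0$ one has $M^{0}=U(\widehat{H}_{4}[\tau])\bar\omega=L_{0,\xi}$). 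The inclusion $M^{i+1}\subseteq M^{i}$ is immediate because $\omega_{i+1}=((a+b)(0)-l)\omega_{i}\in U(\widehat{H}_{4}[\tau])\omega_{i}=M^{i}$. By the universal property Proposition \ref{prop2.1}(iv), applied to the Whittaker module $M^{i}$ and its cyclic vector $\omega_{i}$, there is a surjective homomorphism $\bar\varphi_{i}\colon L_{0,\xi}\to M^{i}$ determined by $u\bar\omega\mapsto u\omega_{i}$.

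The crux is to prove that $\bar\varphi_{i}$ is injective, which I would do by a leading-term argument with respect to the filtration of $L_{0,\xi}$ by $(a+b)(0)$-degree (the number of factors $(a+b)(0)$ in a basis monomial from Proposition \ref{prop2.1}(ii)). Using $[x(p),y(q)]=[x,y](p+q)+p(x,y)\delta_{p+q,0}{\bf k}$ one checks that $(a+b)(0)$ commutes with every $(a+b)(-2k)$ and every $c(-m)$, while $[(a+b)(0),d(-m)]=-(a-b)(-m)$ and $[(a+b)(0),(a-b)(-m)]=-2c(-m)$. Hence, when the top term $(a+b)(0)^{i}$ of $((a+b)(0)-l)^{i}$ is commuted to the far left through a basis monomial $(a+b)(-\tilde\mu)(a-b)(-\nu)d(-\lambda)c(-\eta)$, each passage past a $d$- or $(a-b)$-mode replaces the travelling $(a+b)(0)$ by a correction with no $(a+b)(0)$ factor, strictly lowering the $(a+b)(0)$-degree. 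Therefore
\[
\bar\varphi_{i}\bigl((a+b)(-\tilde\mu)(a-b)(-\nu)d(-\lambda)c(-\eta)\bar\omega\bigr)\equiv (a+b)(-\tilde\mu^{+i})(a-b)(-\nu)d(-\lambda)c(-\eta)\bar\omega
\]
modulo strictly smaller $(a+b)(0)$-degree, where $\tilde\mu^{+i}$ is $\tilde\mu$ with $i$ extra parts equal to $0$. Since $(\tilde\mu,\nu,\lambda,\eta)\mapsto(\tilde\mu^{+i},\nu,\lambda,\eta)$ is injective and the displayed right-hand sides are distinct PBW basis vectors, the images of the basis are triangular with distinct leading terms, hence linearly independent; so $\bar\varphi_{i}$ is injective and therefore an isomorphism, proving (ii). As a by-product, $\operatorname{gr}M^{i}$ is the span of the basis vectors with $\tilde\mu(0)\ge i$, whence $\bigcap_{i}M^{i}=0$.

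Finally, because $\bar\varphi_{i}$ is a module homomorphism with $\bar\varphi_{i}(\bar\omega)=\omega_{i}$, it carries $M^{1}=M_{l}=U(\widehat{H}_{4}[\tau])((a+b)(0)-l)\bar\omega$ onto $U(\widehat{H}_{4}[\tau])((a+b)(0)-l)\omega_{i}=U(\widehat{H}_{4}[\tau])\omega_{i+1}=M^{i+1}$, so it induces an isomorphism
\[
M^{i}/M^{i+1}\;\cong\;L_{0,\xi}/M^{1}\;=\;L_{0,\xi}/M_{l}\;=\;M(\xi,l),
\]
which is (iii); the one-variable multiplicity space is visible from the vector-space identification $L_{0,\xi}\cong\mathbb{C}[(a+b)(0)]\otimes M(\xi,l)$, with the copies of $M(\xi,l)$ in the associated graded indexed by the monomials $(a+b)(0)^{i}$. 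Since $\xi\neq 0$, Lemma \ref{lem 5.4} guarantees that $M(\xi,l)$ is irreducible, so $M^{i+1}$ is a maximal submodule of $M^{i}$, completing (i); and assembling these quotients yields the infinite descending filtration of (iv) whose simple sections are all isomorphic to $M(\xi,l)$, i.e.\ $M(\xi,l)$ occurs with infinite multiplicity. The main obstacle is precisely the injectivity of $\bar\varphi_{i}$: one must organize the leftward commutation of $(a+b)(0)$ carefully enough to see that the leading $(a+b)(0)$-degree term is exactly $(a+b)(-\tilde\mu^{+i})(a-b)(-\nu)d(-\lambda)c(-\eta)\bar\omega$ and that every correction lands in strictly lower degree, which is what makes the triangularity—and hence the whole theorem—work.
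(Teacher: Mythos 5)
Your proof is correct and follows essentially the same route as the paper: both obtain the Whittaker vectors $\omega_{i}$ from Theorem \ref{prop5.3}, define the module map $u\bar{\omega}\mapsto u\omega_{i}$, and deduce (iii), the maximality in (i), and (iv) from Lemma \ref{lem 5.4}. The only difference is that you spell out the leading-term triangularity argument for the injectivity of this map, a step the paper dismisses with ``it is easy to check.''
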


\begin{proof} For (i), by Theorem \ref{prop5.3},  $((a+b)(0)-l)^{i}\bar{\omega}$ is a
Whittaker vector of $L_{0,\xi}$, thus $M^{i}$ is a Whittaker
module.

For (ii), we define the linear map for $i\in \mathbb{N}$,
\begin{eqnarray*}
\phi:  & & L_{0,\xi} \rightarrow  M^{i}\\
& &  u\bar{\omega} \mapsto  u((a+b)(0)-l)^{i}\bar{\omega},
\end{eqnarray*}
where $u\in U(\widehat{H}_{4}[\tau]^{(-)}\oplus \mathbb{C}(a+b)(0))$.
 It is easy to check that $\phi$ is an isomorphism of modules. Thus
 $L_{0,\xi}   \cong M^{i}$.

 For (iii), it is immediate that the linear map
 \begin{eqnarray*}
\phi:  & & L_{0,\xi} \rightarrow  M^{i}/ M^{i+1}\\
& &  u\bar{\omega} \mapsto  u((a+b)(0)-l)^{i}\bar{\omega},
\end{eqnarray*}
is an epimorphism of modules and
$Ker\; \phi =  M^{1} $. Thus $ M^{i}/ M^{i+1}\cong L_{0,\xi}/
M^{1}= M(\xi,l)$. Then $M^{i+1}$ is a maximal submodule of $M^{i}$
by Lemma \ref{lem 5.4}.

(iv) follows from (iii) and Lemma \ref{lem 5.4}.
\end{proof}

\begin{theorem}
If $\psi$ is identically zero and $\xi=0$, then
$\{d(-\lambda)\bar{\omega} \mid \lambda \in \mathcal {P}_{odd}\}$
generates a maximal non-trivial submodule $U$ of $L_{0,0}$.
Furthermore, $L_{0,0}/U$ is isomorphic to the one-dimensional
$\widehat{H}_{4}[\tau]$-module $\mathbb{C}\bar{\omega}$.
\end{theorem}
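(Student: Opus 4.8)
The plan is to identify $U$ with the kernel of the natural augmentation of $L_{0,0}$ onto the trivial module and to prove the two inclusions separately; here $\lambda$ ranges over the nonzero odd partitions, since for $\lambda=\bar0$ the ``generator'' $d(-\lambda)\bar\omega$ is $\bar\omega$ itself. Because $\psi=0$ and $\xi=0$, the element ${\bf k}$ acts as $0$ on $L_{0,0}$, and the one-dimensional module $\mathbb{C}\bar\omega$ on which every element of $\widehat{H}_{4}[\tau]$ acts by $0$ is a Whittaker module of type $\psi=0$ with ${\bf k}$ acting as $\xi=0$. Proposition~\ref{prop2.1}(iv) then yields a surjective homomorphism $\bar\varphi\colon L_{0,0}\to\mathbb{C}\bar\omega$ carrying the cyclic Whittaker vector to the generator; since $\bar\varphi$ kills every basis vector of Proposition~\ref{prop2.1}(ii) that involves at least one generator while $\bar\varphi(\bar\omega)\neq0$, its kernel is exactly the span of all basis vectors other than $\bar\omega$, hence a codimension-one (and maximal) submodule. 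As $d(-\lambda)$ acts by $0$ on the trivial module, each generator $d(-\lambda)\bar\omega$ lies in $\ker\bar\varphi$, so $U\subseteq\ker\bar\varphi$; in particular $\bar\omega\notin U$, so $U$ is a proper and clearly nonzero submodule. It remains to prove $\ker\bar\varphi\subseteq U$, i.e.\ that every basis vector $\neq\bar\omega$ lies in $U$.

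First I would dispose of the basis vectors containing at least one factor $d(-\lambda)$, $\lambda\neq\bar0$. Because ${\bf k}=0$, the relation $[c(m),d(n)]=m\delta_{m+n,0}{\bf k}$ shows the $c$'s and $d$'s act commutingly on $L_{0,0}$, so $d(-\lambda)c(-\eta)\bar\omega=c(-\eta)\,d(-\lambda)\bar\omega\in U$; applying the negative operators $(a+b)(-\tilde\mu)(a-b)(-\nu)$ and using that $U$ is a submodule gives $(a+b)(-\tilde\mu)(a-b)(-\nu)d(-\lambda)c(-\eta)\bar\omega\in U$. Thus only the $d$-free basis vectors $(a+b)(-\tilde\mu)(a-b)(-\nu)c(-\eta)\bar\omega$ remain to be handled.

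These I would reach by induction on the length $N=\#(\tilde\mu)+\#(\nu)+\#(\eta)$. For $N=1$ the single negative generators are produced from the $d$-generators by one positive operator, using the brackets of Lemma~\ref{lem2.2} with ${\bf k}=0$: for instance $(a+b)(-2m)\bar\omega=-(a-b)(1)d(-2m-1)\bar\omega$, $(a-b)(-2m-1)\bar\omega=-(a+b)(2)d(-2m-3)\bar\omega$, and then $c(-2m-1)\bar\omega=-\tfrac{1}{2}(a+b)(2)(a-b)(-2m-3)\bar\omega$, each being the image under a positive operator of an element already in $U$. For $N\geq2$ I peel one negative generator off the left: a factor $c(-\eta_l)$ commutes to the front, and two $(a+b)$'s or two $(a-b)$'s commute with one another, so in those cases the target vector is literally a negative operator applied to a length-$(N-1)$ vector lying in $U$ by induction. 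The only case with nontrivial corrections is peeling an $(a-b)(-2j-1)$ across the $(a+b)(-\tilde\mu)$ block, where $[(a-b)(-2j-1),(a+b)(-\mu_i)]=2c(-2j-1-\mu_i)$ produces error terms that are $d$-free of strictly smaller length $N-1$; these lie in $U$ by induction, and since the left multiple of the length-$(N-1)$ vector lies in $U$, so does the desired length-$N$ vector.

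The main obstacle is precisely this last step: organizing the induction so that the non-commuting brackets generate only \emph{shorter} $d$-free vectors already covered — which is where the degeneration ${\bf k}=0$ (making every $c$ central) and the triangular form of the relations in Lemma~\ref{lem2.2} are essential. Granting $\ker\bar\varphi\subseteq U$, together with $U\subseteq\ker\bar\varphi$ we obtain $U=\ker\bar\varphi$; hence $L_{0,0}/U\cong\mathbb{C}\bar\omega$ is the one-dimensional trivial module. Its simplicity forces $U$ to be maximal, and $d(-1)\bar\omega\neq0$ together with $\bar\omega\notin U$ shows $U$ is non-trivial, which completes the proof.
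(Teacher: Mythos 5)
Your proof is correct and follows essentially the same route as the paper's: both arguments show that every basis vector of positive length lies in $U$ (by producing the single negative-mode vectors $(a+b)(-n)\bar\omega$, $(a-b)(-m)\bar\omega$, $c(-m)\bar\omega$ from the $d$-generators via positive operators, using $\psi\equiv 0$ and ${\bf k}$ acting as $0$, then applying negative operators), and that $\bar\omega\notin U$, so $U$ has codimension one with trivial quotient. The only cosmetic difference is that you obtain $\bar\omega\notin U$ via the augmentation $\bar\varphi:L_{0,0}\to\mathbb{C}\bar\omega$ from Proposition \ref{prop2.1}(iv), while the paper argues directly that elements of $U$ have no $\bar\omega$-component because all contractions produce ${\bf k}=0$; your explicit induction on length is more detailed than needed (peeling the leftmost factor always lands on a standard basis vector of smaller length, so no commutator corrections are ever required), but it is valid.
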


\begin{proof} By  the fact that $\psi\equiv 0$ and the definition of the submodule
$U$, we see that $$(a+b)(-n)\bar{\omega}, \ (a-b)(-m)\bar{\omega}, \
c(-m)\bar{\omega}, \ d(-m)\bar{\omega}\in U$$ for all $n\in
2\mathbb{N}$ and $m\in 2\mathbb{N}+1$. Thus
$$(a+b)(-\tilde{\mu})(a-b)(-\nu)d(-\lambda)c(-\eta)\bar{\omega}\in U$$
for all $(\tilde{\mu},\nu,\lambda,\eta)\in \widetilde{\mathcal
{P}}_{even}\times \mathcal {P}_{odd}\times \mathcal {P}_{odd}\times
\mathcal {P}_{odd}$ with $\#(\tilde{\mu},\nu,\lambda,\eta)>0$. Since
${\bf k}\bar{\omega}=\widehat{H}_{4}[\tau]^{(+)}\bar{\omega}=0$, it follows that
each element of $U$ can be written as a linear combination of elements
of the form
$(a+b)(-\tilde{\mu})(a-b)(-\nu)d(-\lambda)c(-\eta)\bar{\omega}$ with
 $\#(\tilde{\mu},\nu,\lambda,\eta)>0$. Moreover, $\bar{\omega}\notin
U$ as $\xi=0$. Therefore, $L_{0,0}/U$ is isomorphic to the one-dimensional
$\widehat{H}_{4}[\tau]$-module $\mathbb{C}\bar{\omega}$ and $U$ is a
maximal non-trivial submodule of $L_{0,0}$.
\end{proof}
\begin{rem}
If $\psi$ is identically zero and $\xi=0$, then by  Lemma \ref{lem 5.4}
$$M(0,l)= L_{0,0}/ M_{l}$$
is reducible, where $M_{l}=U(\widehat{H}_{4}[\tau])((a+b)(0)-l)\bar{\omega}$. Furthermore, denote by $\bar{\bar{\omega}}$
the image of $\bar{\omega}\in L_{0,0}$ in $L_{0,0}/ M_{l}$. By  Theorem 2.1 of {\rm \cite{CJJ}}, we have
\begin{itemize}
\item[{\rm(i)}] if $l=0$, $\bar{U}=\langle \{d(-\lambda)\bar{\bar{\omega}} \mid \lambda \in \mathcal {P}_{odd}\} \rangle$ is a maximal non-trivial submodule of $M(0,l)$;
\item[{\rm(ii)}] if $l\neq0$,   $\bar{U}'=\langle \{c(-\eta)\bar{\bar{\omega}} \mid \eta \in \mathcal {P}_{odd}\} \rangle$ is a  maximal non-trivial submodule of $M(0,l)$.
\end{itemize}
\end{rem}

\end{document}